\documentclass[11pt,a4paper]{article}

\usepackage[utf8]{inputenc}
\usepackage[T1]{fontenc}
\usepackage{amsmath,amssymb,amsthm}
\usepackage{mathtools}
\usepackage[margin=2.5cm]{geometry}
\usepackage[hidelinks]{hyperref}

\theoremstyle{plain}
\newtheorem{theorem}{Theorem}[section]
\newtheorem{proposition}[theorem]{Proposition}
\newtheorem{lemma}[theorem]{Lemma}
\newtheorem{corollary}[theorem]{Corollary}
\theoremstyle{definition}
\newtheorem{definition}[theorem]{Definition}
\theoremstyle{remark}
\newtheorem{remark}[theorem]{Remark}

\newcommand{\R}{\mathbb{R}}
\newcommand{\dx}{\,\mathrm{d}x}
\newcommand{\ds}{\,\mathrm{d}s}
\newcommand{\n}[1]{\left\| #1 \right\|}
\DeclareMathOperator{\Id}{Id}
\DeclareMathOperator{\Ran}{Ran}

\begin{document}

\title{\textbf{Two-sided estimates of the blow-up time for a semilinear
wave equation with fractional structural damping}}

\author{Firas Kaabi\\[4pt]
\normalsize Faculty of Sciences of Tunis, University of Tunis El Manar\\
\normalsize LR Analyse Non-Lin\'eaire et G\'eom\'etrie, LR21ES08,
El Manar 2, 2092, Tunis, Tunisia\\
\normalsize \texttt{firaskaabi17@gmail.com}}

\date{}

\maketitle

\begin{abstract}
We consider the initial--boundary value problem for the semilinear wave
equation with fractional structural damping
$$
u_{tt}+(-\Delta)^{\theta}u_{t}-\Delta u=|u|^{p-1}u
$$
in a bounded domain, where the exponent $\theta\in[0,1]$ interpolates
between external frictional damping ($\theta=0$) and internal Kelvin--Voigt
viscoelastic damping ($\theta=1$), and therefore parametrises the
frequency dependence of the dissipation mechanism. For initial data with
energy below the depth of the potential well and negative Nehari functional
we prove finite-time blow-up together with an explicit upper bound for the
blow-up time. The bound comes from a single concavity functional in which
the fractional dissipation cancels identically; it therefore has the same
form for every $\theta\in[0,1]$, and it admits a variant that is uniform in
$\theta$. Conversely, for $1<p\le\frac{n+2\theta}{n-2}$ when $n\ge3$, we
establish an explicit lower bound for the blow-up time. Mechanically, the
two bounds delimit a guaranteed interval of existence and a guaranteed
failure time for the model. The admissible range of exponents in the lower
bound widens linearly with $\theta$, which quantifies how the strength of
the internal damping enlarges the class of nonlinear loads for which such a
guarantee can be computed; we do not claim monotonicity in $\theta$ of the
numerical value of the bound. Both theorems are proved for a class of energy
solutions specified by a short list of requirements, so that they are
independent of any particular local existence theorem, and they carry over
unchanged to the elasticity and plate operators used in structural models.
The two endpoint cases recover known results for frictional and strong
damping.

\medskip
\noindent\textbf{Keywords.}
Finite-time blow-up, lifespan estimates, fractional damping, structural
damping, Kelvin--Voigt viscoelasticity, concavity method, lower bound for
the blow-up time.

\smallskip
\noindent\textbf{2020 Mathematics Subject Classification.}
35B44, 35L71, 35R11, 35L20, 74D05, 74H35.
\end{abstract}

\section{Introduction}\label{sec:intro}

Let $\Omega\subset\R^{n}$, $n\ge1$, be a bounded open set and let
$\theta\in[0,1]$ be fixed. We study the initial--boundary value problem
\begin{equation}
\begin{cases}
u_{tt}+(-\Delta)^{\theta}u_{t}-\Delta u=|u|^{p-1}u,
& (x,t)\in\Omega\times(0,T),\\[2pt]
u=0, & (x,t)\in\partial\Omega\times(0,T),\\[2pt]
u(x,0)=u_{0}(x),\quad u_{t}(x,0)=u_{1}(x), & x\in\Omega,
\end{cases}
\label{eq:main}
\end{equation}
where $(-\Delta)^{\theta}$ denotes the spectral fractional power of the
Dirichlet Laplacian and the exponent $p$ satisfies
\begin{equation}
p>1 \ \text{ if } n\in\{1,2\},
\qquad
1<p<\tfrac{n+2}{n-2} \ \text{ if } n\ge3.
\label{eq:Hp}
\end{equation}
When local well-posedness results are quoted we assume in addition that
$\partial\Omega$ is of class $C^{1,1}$; every functional inequality used in
this paper is valid for an arbitrary bounded open set, as explained in
Remark~\ref{rem:noboundary}. Both the solution of \eqref{eq:main} and its
maximal existence time depend on $\theta$; since $\theta$ is fixed we do not
display this dependence, except in Corollary~\ref{cor:uniform}, where a
bound valid simultaneously for all $\theta\in[0,1]$ is recorded and the
notation $T_{\max}(\theta)$ is used.

The parameter $\theta$ selects a dissipation mechanism. At $\theta=0$ the
damping is the frictional term $u_{t}$, a zeroth-order dissipation which
acts equally on all frequencies and models energy loss external to the
material, such as air resistance or dry friction at the supports of a
structure. At $\theta=1$ it is the Kelvin--Voigt, or strong, damping
$-\Delta u_{t}$, a second-order dissipation which acts most strongly on the
high frequencies and models the internal viscous stress of a viscoelastic
solid, lending the equation a parabolic character. Intermediate values of
$\theta$ interpolate between the two and correspond to the \emph{structural
damping} introduced by Chen and Triggiani \cite{ChenTriggiani1989,
ChenTriggiani1990} in the analysis of damped elastic systems: the underlying
linear semigroup is analytic for $\theta\in[\frac12,1]$ and of Gevrey class
for $\theta\in(0,\frac12)$. The scale therefore offers a one-parameter
family of models whose frequency dependence can be matched to a measured
damping law, rather than a choice between two extremes. Decay rates and
critical exponents for the corresponding semilinear Cauchy problem in
$\R^{n}$ have been studied in depth, see for instance
\cite{IkehataTodorovaYordanov2013,DAbbiccoReissig2014} and the references
there.

The superlinear source term $|u|^{p-1}u$ in \eqref{eq:main} represents a
self-reinforcing load which grows faster than the restoring force of the
medium, and the finite-time blow-up of the solution is the model's
expression of loss of structural integrity. Configurations of this kind
arise whenever slender, flexible components carry dynamic loads large enough
for the linear theory to fail, as in bridges, towers and aircraft wings,
and the engineering response is to introduce damping devices whose
dissipation depends nonlinearly on velocity; see
\cite{Ikhouane2007,Kerschen2006,Lazar2013,Spencer1997}. For such models the
quantity of practical interest is not only whether failure occurs but
\emph{when}. A lower bound for the blow-up time is a guaranteed interval on
which the model is still valid, and an upper bound is a guaranteed failure
time; together they bracket the lifespan. The question addressed in the
present paper is how that bracket depends on the strength $\theta$ of the
dissipation, and how far the dependence can be displayed explicitly.

For the two endpoint values of $\theta$ the qualitative picture is
classical. At $\theta=0$, finite-time blow-up under negative initial energy
goes back to the concavity method of Levine \cite{Levine1974} and
Kalantarov--Ladyzhenskaya \cite{KalantarovLadyzhenskaya1978}, and was later
extended to the whole range of initial energies below the depth $d$ of the
potential well of Payne--Sattinger \cite{PayneSattinger1975}, see
\cite{Ikehata1996,Vitillaro1999}. At $\theta=1$, the dichotomy between
global existence and finite-time blow-up was settled by Gazzola and
Squassina \cite{GazzolaSquassina2006}, and quantitative two-sided bounds for
the blow-up time in the strongly damped case were obtained in
\cite{BchatniaHamoudaKaabi2026}.

Explicit lower bounds for the blow-up time, in the spirit of Payne and
Schaefer \cite{PayneSchaefer2007}, have been derived for a variety of damped
hyperbolic problems: for the damped semilinear wave equation by Sun, Guo and
Gao \cite{SunGuoGao2014}, for two model wave equations by Zhou
\cite{Zhou2015}, for viscoelastic equations with nonlinear sources by Guo and
Liu \cite{GuoLiu2016}, and, together with upper bounds, for variable sources
by Sun, Ren and Gao \cite{SunRenGao2016}. In all these works the mechanism
is the same: a first-order differential inequality for a norm of the
solution, integrated from the initial time and combined with the blow-up
alternative. What varies is the class of admissible exponents, and that
class is dictated by the smoothing that the dissipation makes available.

The reference closest to the present paper is the work of Ding and Zhou
\cite{DingZhou2022} on a quasilinear wave equation with structural or strong
damping, where local well-posedness, global existence, asymptotic behaviour,
blow-up at subcritical, critical and arbitrarily high initial energy, and
upper and lower bounds for the blow-up time are all obtained; the proof of
global existence given there was subsequently corrected by the same authors
in \cite{DingZhou2023}. The exponent restriction \eqref{eq:plower} below,
which interpolates linearly between the two endpoint ranges, is of the same
nature as the restriction appearing in that analysis, and we make no claim
of novelty for it.

Our purpose is complementary to \cite{DingZhou2022}. It is to treat the
semilinear equation \eqref{eq:main} on the entire scale $\theta\in[0,1]$,
including the frictional endpoint $\theta=0$ and the Gevrey range
$\theta\in(0,\frac12)$, where the semigroup is no longer analytic and the
standard analytic-semigroup smoothing argument is unavailable, and to display
explicitly how $\theta$ enters both lifespan estimates. Concretely, the
contribution is fourfold. First, a single Levine-type functional, namely
\eqref{eq:M} below, produces the upper bound for every $\theta\in[0,1]$ at
once: it is built so that the fractional dissipation cancels identically
when the second derivative is computed, and consequently no property of
$(-\Delta)^{\theta}$ beyond self-adjointness and positivity is used. The
threshold parameter $\sigma_{\theta}$ of Theorem~\ref{thm:upper} is given by
a closed formula, and Corollary~\ref{cor:uniform} turns the bound into one
that holds simultaneously for all $\theta\in[0,1]$. Second, the lower bound
is obtained from the dissipation used quantitatively, and the admissible
exponent range \eqref{eq:plower} appears as a linear interpolation between
the frictional range $p\le\frac{n}{n-2}$ and the Kelvin--Voigt range
$p\le\frac{n+2}{n-2}$, with all constants identified as norms of two
explicit embeddings. Third, both theorems are proved for a class of energy
solutions specified in Definition~\ref{def:solution}, so that they are
insensitive to which local well-posedness theorem is invoked. This point
matters here, because for intermediate $\theta$ and exponents
$p>\frac{n}{n-2}$ the local theory in the energy space is delicate, as
discussed in Remark~\ref{rem:lwpdiscussion}. Fourth, because the proofs use
no property of $-\Delta$ beyond self-adjointness, positivity and compactness
of the resolvent, both bounds transfer verbatim to
$u_{tt}+A^{\theta}u_{t}+Au=|u|^{p-1}u$ for any operator $A$ of that type
whose form domain embeds in $L^{p+1}(\Omega)$. This covers the operators
that actually occur in structural models, in particular the linear
elasticity operator and the fourth-order plate and beam operators with
internal damping; see Remark~\ref{rem:abstract}. For $\theta=1$ the two
theorems reduce to estimates of the type obtained in
\cite{GazzolaSquassina2006,BchatniaHamoudaKaabi2026}, and for $\theta=0$ to
estimates of the type in
\cite{Levine1974,KalantarovLadyzhenskaya1978,SunGuoGao2014}.

The results below are obtained by analytical means only, through energy
estimates and differential inequalities. A numerical validation of the
corresponding two-sided bracket in the strongly damped case $\theta=1$,
using physics-informed neural networks, is reported in
\cite{BchatniaHamoudaKaabi2026}; extending such a computation across the
scale requires a discretisation of the nonlocal operator $A^{\theta}$, for
which a spectral Galerkin representation is the natural choice, and we
regard this as a separate problem which is not addressed here.

This paper is structured as follows. Section~\ref{sec:main} fixes the
notation, specifies the class of solutions for which we work, and states the
two main results, Theorems~\ref{thm:upper} and~\ref{thm:lower}, together
with their corollaries and a series of remarks.
Section~\ref{sec:auxiliary} collects the auxiliary results needed in the
proofs: the spectral inequalities for $A^{\theta/2}$, the embeddings behind
the constants appearing in Theorem~\ref{thm:lower}, the parameter ranges in
which energy solutions are known to exist, and two lemmas, one for each main
theorem. Section~\ref{sec:upper} establishes the upper bound and
Section~\ref{sec:lower} the lower bound. Section~\ref{sec:conclusion}
summarises the results, records the extension to elasticity and plate
operators, and lists the questions left open.

\section{Notation and main results}\label{sec:main}

We write $A:=-\Delta$ with Dirichlet boundary conditions, and we denote by
$(\lambda_{k},e_{k})_{k\ge1}$ its Dirichlet eigenpairs, with
$0<\lambda_{1}\le\lambda_{2}\le\cdots$ and $(e_{k})_{k\ge1}$ an orthonormal
basis of $L^{2}(\Omega)$. For $s\ge0$ the spectral fractional powers of $A$
are
\begin{equation}
A^{s}v=\sum_{k\ge1}\lambda_{k}^{s}(v,e_{k})e_{k},
\qquad
D(A^{s})=\Bigl\{v\in L^{2}(\Omega):
\sum_{k\ge1}\lambda_{k}^{2s}(v,e_{k})^{2}<\infty\Bigr\},
\label{eq:powers}
\end{equation}
so that $D(A^{1/2})=H_{0}^{1}(\Omega)$ with $\n{A^{1/2}v}=\n{\nabla v}$ and
$A^{0}=\Id$; in particular $H_{0}^{1}(\Omega)\subset D(A^{\theta/2})$ for
every $\theta\in[0,1]$, so that $A^{\theta/2}v$ is defined for every
$v\in H_{0}^{1}(\Omega)$. Written in the eigenbasis, the damping term of
\eqref{eq:main} acts on the $k$th mode as multiplication by
$\lambda_{k}^{\theta}$, so that $\theta$ is precisely the rate at which the
dissipation grows with the modal frequency. We denote by $(\cdot,\cdot)$ the
inner product of $L^{2}(\Omega)$, by $\langle\cdot,\cdot\rangle$ the duality
pairing between $H^{-1}(\Omega)$ and $H_{0}^{1}(\Omega)$, and by
$\n{\cdot}_{q}$ the norm of $L^{q}(\Omega)$; we abbreviate
$\n{\cdot}:=\n{\cdot}_{2}$.

The energy and Nehari functionals associated with \eqref{eq:main} are
\begin{equation}
E(t):=\tfrac12\n{u_{t}(t)}^{2}+\tfrac12\n{\nabla u(t)}^{2}
-\tfrac{1}{p+1}\n{u(t)}_{p+1}^{p+1},
\qquad
I(v):=\n{\nabla v}^{2}-\n{v}_{p+1}^{p+1},
\label{eq:energy}
\end{equation}
the depth of the potential well is
\begin{equation}
d:=\inf\bigl\{J(v):\, v\in H_{0}^{1}(\Omega)\setminus\{0\},\ I(v)=0\bigr\},
\qquad
J(v):=\tfrac12\n{\nabla v}^{2}-\tfrac{1}{p+1}\n{v}_{p+1}^{p+1},
\label{eq:depth}
\end{equation}
which under \eqref{eq:Hp} is positive and equal to
$d=\frac{p-1}{2(p+1)}S_{*}^{-\frac{2(p+1)}{p-1}}$, where
\begin{equation}
S_{*}:=\sup_{0\neq v\in H_{0}^{1}(\Omega)}
\frac{\n{v}_{p+1}}{\n{\nabla v}}
\label{eq:Sstar}
\end{equation}
is the best constant of the embedding $H_{0}^{1}(\Omega)\hookrightarrow
L^{p+1}(\Omega)$ \cite{PayneSattinger1975}. Finally we set
\begin{equation}
\Phi(t):=\n{u_{t}(t)}^{2}+\n{\nabla u(t)}^{2},
\label{eq:Phi}
\end{equation}
twice the mechanical energy of the linear part, and the quantity whose
divergence characterises blow-up.

Since both main results are a priori estimates, it is both cleaner and safer
to isolate the properties of the solution that they actually use.

\begin{definition}\label{def:solution}
Let $\theta\in[0,1]$, let $p$ satisfy \eqref{eq:Hp}, let
$(u_{0},u_{1})\in H_{0}^{1}(\Omega)\times L^{2}(\Omega)$ and let
$T\in(0,\infty]$. A function $u$ is an \emph{energy solution} of
\eqref{eq:main} on $[0,T)$ if
\begin{enumerate}
\item[(S1)] $u\in C\bigl([0,T);H_{0}^{1}(\Omega)\bigr)\cap
C^{1}\bigl([0,T);L^{2}(\Omega)\bigr)$ and
$u_{t}\in L^{2}_{\mathrm{loc}}\bigl([0,T);D(A^{\theta/2})\bigr)$;
\item[(S2)] $u(0)=u_{0}$, $u_{t}(0)=u_{1}$, and
$u_{tt}\in L^{2}_{\mathrm{loc}}\bigl([0,T);H^{-1}(\Omega)\bigr)$ with
$$
\langle u_{tt},v\rangle
+\bigl(A^{\theta/2}u_{t},A^{\theta/2}v\bigr)
+(\nabla u,\nabla v)
=\bigl\langle|u|^{p-1}u,v\bigr\rangle
$$
for a.e.\ $t\in(0,T)$ and all $v\in H_{0}^{1}(\Omega)$;
\item[(S3)] the energy inequality
$$
E(t)+\int_{0}^{t}\n{A^{\theta/2}u_{s}(s)}^{2}\ds\le E(0),
\qquad t\in[0,T),
$$
holds;
\item[(S4)] $T$ is maximal, in the sense that either $T=\infty$, or
$T<\infty$ and $\limsup_{t\to T^{-}}\Phi(t)=+\infty$.
\end{enumerate}
We then write $T=T_{\max}$, and we say that $u$ \emph{blows up at time
$T_{\max}$} when $T_{\max}<\infty$; in that case (S4) is the blow-up
alternative.
\end{definition}

All the terms in (S2) are well defined: $u\in H_{0}^{1}(\Omega)$ and
\eqref{eq:Hp} give $|u|^{p-1}u\in L^{(p+1)/p}(\Omega)\subset
H^{-1}(\Omega)$, while $A^{\theta/2}v$ makes sense for
$v\in H_{0}^{1}(\Omega)\subset D(A^{\theta/2})$. The quantity
$\int_{0}^{t}\n{A^{\theta/2}u_{s}}^{2}\ds$ appearing in (S3) is the energy
dissipated by the damping up to time $t$, so that (S3) states that the sum
of the stored and dissipated energy does not exceed its initial value. Only
this inequality is required, not the corresponding identity. This is what
limits of Galerkin approximations provide, and it is all that the proofs
below use. Parameter ranges in which energy solutions are known to exist
are recorded in Proposition~\ref{prop:local}, and the ranges in which we do
not claim existence are discussed in Remark~\ref{rem:lwpdiscussion}.

The first main result gives blow-up together with an upper bound for the
lifespan. Its hypotheses are the classical ones of the potential-well
method; what is specific to \eqref{eq:main} is the explicit appearance of
$\n{A^{\theta/2}u_{0}}^{2}$, and nothing else.

\begin{theorem}[Blow-up and upper bound of the blow-up time]
\label{thm:upper}
Let $\theta\in[0,1]$, assume \eqref{eq:Hp}, and let
$(u_{0},u_{1})\in H_{0}^{1}(\Omega)\times L^{2}(\Omega)$ satisfy
\begin{equation}
E(0)<d
\qquad\text{and}\qquad
I(u_{0})<0 .
\label{eq:datahyp}
\end{equation}
Let $u$ be an energy solution of \eqref{eq:main} on $[0,T_{\max})$. Fix
\begin{equation}
\beta\in\bigl(0,\,2(d-E(0))\bigr]
\label{eq:beta}
\end{equation}
and set
\begin{equation}
\sigma_{\theta}:=\max\Bigl\{0,\
\frac{1}{\beta}\Bigl[\frac{2}{p-1}\n{A^{\theta/2}u_{0}}^{2}
-(u_{0},u_{1})\Bigr]\Bigr\}.
\label{eq:sigmatheta}
\end{equation}
Then for every $\sigma>\sigma_{\theta}$ one has
\begin{equation}
\tfrac{p-1}{2}\bigl[(u_{0},u_{1})+\beta\sigma\bigr]
>\n{A^{\theta/2}u_{0}}^{2},
\label{eq:sigma}
\end{equation}
the solution blows up in finite time, and
\begin{equation}
T_{\max}\ \le\
\frac{\n{u_{0}}^{2}+\beta\sigma^{2}}
{\dfrac{p-1}{2}\bigl[(u_{0},u_{1})+\beta\sigma\bigr]
-\n{A^{\theta/2}u_{0}}^{2}} .
\label{eq:upperbound}
\end{equation}
\end{theorem}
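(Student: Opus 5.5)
The plan is Levine's concavity method, applied to a functional built so that the fractional dissipation cancels when it is differentiated twice. Suppose, for contradiction, that $T_{\max}$ is larger than the right-hand side of \eqref{eq:upperbound}. Fix a finite $T<T_{\max}$ above that value and, for $t\in[0,T]$, set
\begin{equation*}
M(t):=\n{u(t)}^{2}+\int_{0}^{t}\n{A^{\theta/2}u(s)}^{2}\ds
+(T-t)\n{A^{\theta/2}u_{0}}^{2}+\beta(t+\sigma)^{2}.
\end{equation*}
This is the functional I expect the paper to call \eqref{eq:M}. Since $\n{A^{\theta/2}u(t)}^{2}-\n{A^{\theta/2}u_{0}}^{2}=2\int_{0}^{t}(A^{\theta/2}u,A^{\theta/2}u_{s})\ds$, we get
\begin{equation*}
M'(t)=2(u,u_{t})+2\int_{0}^{t}(A^{\theta/2}u,A^{\theta/2}u_{s})\ds+2\beta(t+\sigma).
\end{equation*}
Next compute $\frac{d}{dt}(u,u_{t})=\n{u_{t}}^{2}+\langle u_{tt},u\rangle$ and insert (S2) with $v=u$. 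The term $-(A^{\theta/2}u_{t},A^{\theta/2}u)$ cancels exactly against the derivative of the integral term, leaving
\begin{equation*}
M''(t)=2\n{u_{t}}^{2}-2\n{\nabla u}^{2}+2\n{u}_{p+1}^{p+1}+2\beta .
\end{equation*}
The technical point, which I regard as the main obstacle, is justifying these derivatives at the regularity (S1)--(S2). The plan is as follows. First, the Lions--Magenes lemma applies to $u\in L^{2}(0,T;H_{0}^{1})$ with $u_{t}\in L^{2}(0,T;L^{2})$ and $u_{tt}\in L^{2}(0,T;H^{-1})$, so $t\mapsto(u,u_{t})$ is absolutely continuous with the stated derivative. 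Second, the absolute continuity of $\n{A^{\theta/2}u}^{2}$ follows from $u,u_{t}\in L^{2}(0,T;D(A^{\theta/2}))$. Hence $M\in C^{1}$, $M'$ is absolutely continuous, and the concavity argument can be run with $M''$ interpreted almost everywhere.

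The second step is invariance of the unstable set. By (S3), $J(u(t))\le E(t)\le E(0)<d$. If $I(u(t_{0}))=0$ at some first time $t_{0}$, then $u(t_{0})\neq0$: on $[0,t_{0})$ one has $I<0$, which forces $\n{\nabla u}^{2}>\frac{2(p+1)}{p-1}d$ by the Sobolev constant $S_{*}$, and this lower bound persists at $t_{0}$ by continuity. The definition \eqref{eq:depth} then gives $J(u(t_{0}))\ge d$, a contradiction. So $I(u(t))<0$ and $(p-1)\n{\nabla u}^{2}>2(p+1)d$ on $[0,T]$.

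With these in hand, eliminate $\n{u}_{p+1}^{p+1}$ using (S3):
\begin{equation*}
\n{u}_{p+1}^{p+1}\ge (p+1)\Bigl[\tfrac12\n{u_{t}}^{2}+\tfrac12\n{\nabla u}^{2}-E(0)+\int_{0}^{t}\n{A^{\theta/2}u_{s}}^{2}\ds\Bigr].
\end{equation*}
Substituting this into $M''$ and using the lower bound on $\n{\nabla u}^{2}$ together with $2(p+1)(d-E(0))\ge(p+1)\beta$ from \eqref{eq:beta}, we obtain
\begin{equation*}
M''\ge(p+3)\Bigl[\n{u_{t}}^{2}+\int_{0}^{t}\n{A^{\theta/2}u_{s}}^{2}\ds+\beta\Bigr].
\end{equation*}
On the other side, Cauchy--Schwarz applied term by term to $\tfrac12M'$ bounds $(M')^{2}/4$ by $M$ times the same bracket; the nonnegative term $(T-t)\n{A^{\theta/2}u_{0}}^{2}$ is simply discarded on the $M$ side. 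Hence $MM''-\frac{p+3}{4}(M')^{2}\ge0$, that is, $(M^{-\alpha})''\le0$ with $\alpha=\frac{p-1}{4}$.

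The last step uses the initial values. We have $M'(0)=2[(u_{0},u_{1})+\beta\sigma]$, and for $\sigma>\sigma_{\theta}$ this is positive. Indeed, \eqref{eq:sigma} is just the definition \eqref{eq:sigmatheta} rearranged, and its right-hand side is nonnegative. Concavity of $M^{-\alpha}$ forces $M^{-\alpha}$ to vanish no later than time $M(0)/(\alpha M'(0))$. Since $M$ is finite on $[0,T]$, this gives
\begin{equation*}
T\le\frac{\n{u_{0}}^{2}+T\n{A^{\theta/2}u_{0}}^{2}+\beta\sigma^{2}}{\frac{p-1}{2}\bigl[(u_{0},u_{1})+\beta\sigma\bigr]}.
\end{equation*}
Solving this linear inequality for $T$ is possible precisely because of \eqref{eq:sigma}, and it yields \eqref{eq:upperbound}. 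This contradicts the choice of $T$. Therefore $T_{\max}$ is bounded by \eqref{eq:upperbound}, and in particular $T_{\max}<\infty$, so by (S4) the solution blows up.
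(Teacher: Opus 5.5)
Your proof is correct and is essentially the paper's argument: the same Levine functional, in which the terms $\pm2(A^{\theta/2}u,A^{\theta/2}u_t)$ cancel in $M''$, the same invariance of the unstable set, the same lower bound $M''\ge(p+3)\bigl[\|u_t\|^2+\int_0^t\|A^{\theta/2}u_s\|^2\,ds+\beta\bigr]$, and the same Cauchy--Schwarz and concavity step. The differences are cosmetic. You insert an arbitrary $T\in(T_1,T_{\max})$ into the functional, where $T_1$ is the right-hand side of \eqref{eq:upperbound}, and close by solving a linear inequality for $T$; the paper instead inserts $T_1$ itself, checks that the zero of the concavity line is exactly $T_1$, and contradicts the boundedness of $M$ on $[0,T_1]$. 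As a small precision, the product rule for $(u,u_t)$ is a time-mollification variant of the Lions--Magenes lemma rather than the lemma itself, since $u_t\notin L^2(0,T;H_0^1(\Omega))$ when $\theta<1$.
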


The right-hand side of \eqref{eq:upperbound} depends on $\theta$ only
through $\n{A^{\theta/2}u_{0}}^{2}$, and that quantity is dominated,
uniformly in $\theta\in[0,1]$, by
\begin{equation}
N_{0}:=\max\bigl\{\n{u_{0}}^{2},\n{\nabla u_{0}}^{2}\bigr\},
\label{eq:N0}
\end{equation}
by the interpolation inequality \eqref{eq:interp} of
Lemma~\ref{lem:interp}. Replacing $\n{A^{\theta/2}u_{0}}^{2}$ by $N_{0}$
therefore weakens the bound slightly and removes $\theta$ from it
altogether, which is useful when the damping law of the material is known
only to lie somewhere on the scale.

\begin{corollary}[A bound uniform in $\theta$]\label{cor:uniform}
Assume \eqref{eq:Hp}, \eqref{eq:datahyp} and \eqref{eq:beta}, and let
$$
\sigma_{*}:=\max\Bigl\{0,\
\frac{1}{\beta}\Bigl[\frac{2}{p-1}N_{0}-(u_{0},u_{1})\Bigr]\Bigr\},
\qquad\text{so that }\ \sigma_{*}\ge\sigma_{\theta}
\ \text{ for every }\theta\in[0,1].
$$
Then for every $\sigma>\sigma_{*}$ and every $\theta\in[0,1]$, every energy
solution of \eqref{eq:main} blows up in finite time and
$$
T_{\max}(\theta)\ \le\
\frac{\n{u_{0}}^{2}+\beta\sigma^{2}}
{\frac{p-1}{2}\bigl[(u_{0},u_{1})+\beta\sigma\bigr]-N_{0}} .
$$
\end{corollary}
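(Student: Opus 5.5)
The corollary is deduced from Theorem~\ref{thm:upper} applied at each fixed $\theta$; no new dynamics are needed. The one analytic input is the interpolation inequality \eqref{eq:interp} of Lemma~\ref{lem:interp}, which gives
$$
\n{A^{\theta/2}u_{0}}^{2}\le N_{0}
\qquad\text{for every }\theta\in[0,1].
$$
This can also be seen directly in the eigenbasis. For each $k$ we have $\lambda_{k}^{\theta}\le\max\{1,\lambda_{k}\}\le1+\lambda_{k}$. Hölder's inequality with exponents $1/(1-\theta)$ and $1/\theta$ gives the sharper convexity bound
$$
\n{A^{\theta/2}u_{0}}^{2}\le\n{u_{0}}^{2(1-\theta)}\n{\nabla u_{0}}^{2\theta}\le N_{0}.
$$

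First I check $\sigma_{*}\ge\sigma_{\theta}$. The bracket $\frac{2}{p-1}X-(u_{0},u_{1})$ is increasing in $X$, so replacing $X=\n{A^{\theta/2}u_{0}}^{2}$ by $N_{0}$ can only raise it, and taking the maximum with $0$ preserves the order. Hence every $\sigma>\sigma_{*}$ also satisfies $\sigma>\sigma_{\theta}$. Fix such a $\sigma$ and any $\theta\in[0,1]$. The hypotheses \eqref{eq:datahyp} and \eqref{eq:beta} do not involve $\theta$, so Theorem~\ref{thm:upper} applies. It yields finite-time blow-up of every energy solution together with the bound \eqref{eq:upperbound}, whose denominator is positive by \eqref{eq:sigma}.

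It remains to compare denominators. Since $\sigma>\sigma_{*}\ge\frac{1}{\beta}\bigl[\frac{2}{p-1}N_{0}-(u_{0},u_{1})\bigr]$, multiplying by $\beta\frac{p-1}{2}>0$ gives
$$
D_{*}:=\frac{p-1}{2}\bigl[(u_{0},u_{1})+\beta\sigma\bigr]-N_{0}>0 .
$$
The bound $\n{A^{\theta/2}u_{0}}^{2}\le N_{0}$ then shows that the denominator in \eqref{eq:upperbound} is at least $D_{*}$. The numerator $\n{u_{0}}^{2}+\beta\sigma^{2}$ is positive and independent of $\theta$; it cannot vanish, since $I(u_{0})<0$ forces $u_{0}\neq0$. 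A smaller positive denominator gives a larger quotient, so the right-hand side of \eqref{eq:upperbound} is at most the $\theta$-free expression in the corollary, and $T_{\max}(\theta)$ inherits that bound.

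The only step needing care is the positivity of $D_{*}$, since the comparison of quotients requires both denominators to be positive. This is exactly why the strict inequality $\sigma>\sigma_{*}$, rather than $\sigma>\sigma_{\theta}$, is imposed. Everything else is monotonicity bookkeeping.
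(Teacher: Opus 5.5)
Your proof is correct and is essentially the paper's argument: \eqref{eq:interp} gives $\n{A^{\theta/2}u_{0}}^{2}\le N_{0}$, hence $\sigma_{\theta}\le\sigma_{*}$; Theorem~\ref{thm:upper} then applies for each $\theta$; and the bound transfers by monotonicity of $x\mapsto(\n{u_{0}}^{2}+\beta\sigma^{2})/(c-x)$ on $(-\infty,c)$, with $c=\frac{p-1}{2}[(u_{0},u_{1})+\beta\sigma]$, where your check $D_{*}>0$ is exactly the condition $N_{0}<c$ that the paper leaves implicit. One small caveat: the pointwise bound $\lambda_{k}^{\theta}\le1+\lambda_{k}$ on its own only yields $\n{u_{0}}^{2}+\n{\nabla u_{0}}^{2}$, which exceeds $N_{0}$ whenever $u_{0}\neq0$, so it is the H\"older step, not the eigenvalue-by-eigenvalue bound, that actually delivers $N_{0}$.
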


The second main result goes in the opposite direction. It requires no
assumption at all on the initial energy, and it is the place where the
dissipation is used rather than neutralised.

\begin{theorem}[Lower bound of the blow-up time]\label{thm:lower}
Let $\theta\in[0,1]$, assume \eqref{eq:Hp} and, if $n\ge3$, assume in
addition
\begin{equation}
1<p\le\frac{n+2\theta}{n-2}.
\label{eq:plower}
\end{equation}
Let $(u_{0},u_{1})\in H_{0}^{1}(\Omega)\times L^{2}(\Omega)$ with
$(u_{0},u_{1})\neq(0,0)$, and let $u$ be an energy solution of
\eqref{eq:main} on $[0,T_{\max})$. Then
\begin{equation}
T_{\max}\ \ge\ \frac{1}{(p-1)\,K_{p,\theta}\,\Phi(0)^{\,p-1}},
\qquad
\Phi(0)=\n{u_{1}}^{2}+\n{\nabla u_{0}}^{2},
\label{eq:lowerbound}
\end{equation}
where $K_{p,\theta}:=\frac12 S_{\theta}^{2}B_{p,\theta}^{2p}$ and, with
$q_{\theta}:=\frac{2n}{n-2\theta}$ and
$q_{\theta}':=\frac{2n}{n+2\theta}$,
\begin{equation}
S_{\theta}:=\sup_{0\neq v\in D(A^{\theta/2})}
\frac{\n{v}_{q_{\theta}}}{\n{A^{\theta/2}v}},
\qquad
B_{p,\theta}:=\sup_{0\neq v\in H_{0}^{1}(\Omega)}
\frac{\n{v}_{p q_{\theta}'}}{\n{\nabla v}} .
\label{eq:constants}
\end{equation}
For $n\le2$ the same conclusion holds for every $p>1$ upon taking
$q_{\theta}:=2$, in which case $S_{\theta}=\lambda_{1}^{-\theta/2}$, and
$q_{\theta}'=2$.
\end{theorem}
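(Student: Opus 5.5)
The plan is to derive a Bihari-type integral inequality $\Phi(t)\le\Phi(0)+K_{p,\theta}\int_{0}^{t}\Phi(s)^{p}\ds$ and then combine it with the blow-up alternative (S4). Formally one would differentiate $\Phi$ and use (S2) with $v=u_{t}$:
$$
\Phi'(t)=-2\n{A^{\theta/2}u_{t}}^{2}+2\bigl(|u|^{p-1}u,u_{t}\bigr).
$$
An energy solution need not be regular enough to justify this, and (S3) is only an inequality. I will therefore work at the integrated level. Writing $2E(t)=\Phi(t)-\frac{2}{p+1}\n{u(t)}_{p+1}^{p+1}$, the energy inequality (S3) gives
$$
\Phi(t)\le\Phi(0)-\tfrac{2}{p+1}\n{u_{0}}_{p+1}^{p+1}+\tfrac{2}{p+1}\n{u(t)}_{p+1}^{p+1}-2\int_{0}^{t}\n{A^{\theta/2}u_{s}}^{2}\ds .
$$
I then use the chain rule
$$
\n{u(t)}_{p+1}^{p+1}-\n{u_{0}}_{p+1}^{p+1}=(p+1)\int_{0}^{t}\bigl(|u|^{p-1}u,u_{s}\bigr)\ds,
$$
which gives
$$
\Phi(t)\le\Phi(0)+2\int_{0}^{t}\Bigl[\bigl(|u|^{p-1}u,u_{s}\bigr)-\n{A^{\theta/2}u_{s}}^{2}\Bigr]\ds .
$$

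The key pointwise estimate uses the two embeddings in \eqref{eq:constants}. By H\"older with the conjugate pair $(q_{\theta}',q_{\theta})$,
$$
\bigl|(|u|^{p-1}u,u_{s})\bigr|\le\n{u}_{pq_{\theta}'}^{p}\,\n{u_{s}}_{q_{\theta}}\le B_{p,\theta}^{p}\n{\nabla u}^{p}\,S_{\theta}\n{A^{\theta/2}u_{s}} .
$$
Young's inequality then bounds this by $\n{A^{\theta/2}u_{s}}^{2}+\frac14S_{\theta}^{2}B_{p,\theta}^{2p}\n{\nabla u}^{2p}$. The dissipation term cancels exactly, and since $\n{\nabla u}^{2p}\le\Phi^{p}$ we obtain
$$
\Phi(t)\le\Phi(0)+K_{p,\theta}\int_{0}^{t}\Phi(s)^{p}\ds,\qquad K_{p,\theta}=\tfrac12S_{\theta}^{2}B_{p,\theta}^{2p}.
$$

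Here \eqref{eq:plower} is exactly what makes $B_{p,\theta}$ finite. Indeed $pq_{\theta}'\le\frac{2n}{n-2}$ is equivalent to $p(n-2)\le n+2\theta$, and finiteness of $S_{\theta}$ is the fractional Sobolev embedding $D(A^{\theta/2})\hookrightarrow L^{q_{\theta}}$ (the auxiliary results of Section~\ref{sec:auxiliary}). For $n\le2$ I take $q_{\theta}=q_{\theta}'=2$ and use $\n{v}\le\lambda_{1}^{-\theta/2}\n{A^{\theta/2}v}$ together with $H_{0}^{1}\hookrightarrow L^{2p}$. Next comes Bihari comparison. Let $G(t)$ denote the right-hand side. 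Then $G'=K\Phi^{p}\le KG^{p}$ a.e. and $G(0)=\Phi(0)>0$, since the data are nonzero. Hence
$$
G(t)^{1-p}\ge\Phi(0)^{1-p}-(p-1)K_{p,\theta}\,t,
$$
so $\Phi\le G$ stays bounded on $[0,T]$ for every $T<T_{*}:=1/((p-1)K_{p,\theta}\Phi(0)^{p-1})$. If $T_{\max}<T_{*}$, then $\Phi$ would be bounded on $[0,T_{\max})$, contradicting (S4). Thus $T_{\max}\ge T_{*}$, which is \eqref{eq:lowerbound}.

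The main obstacle is the chain rule for $t\mapsto\n{u(t)}_{p+1}^{p+1}$ at the regularity given by (S1). I would first try to justify it as follows. By H\"older with the same exponents as above, the map $(u,w)\mapsto(|u|^{p-1}u,w)$ is continuous on $H_{0}^{1}\times L^{q_{\theta}}$. Moreover $u_{t}\in L^{2}_{\mathrm{loc}}(D(A^{\theta/2}))\subset L^{2}_{\mathrm{loc}}(L^{q_{\theta}})$ and $u\in C(H_{0}^{1})$. So one can mollify $u$ in time, apply the classical chain rule to the smooth approximations, and pass to the limit using the convexity of $|\cdot|^{p+1}$ and the strong convergence of the mollifiers in these spaces. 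If this turns out awkward, a fallback is to perform the whole estimate on Galerkin approximations and pass to the limit, at the cost of tying the result to a construction.
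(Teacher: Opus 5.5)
Your proposal is correct and follows the paper's proof essentially step by step. The integrated form of (S3) combined with the chain rule for $t\mapsto\|u(t)\|_{p+1}^{p+1}$ is exactly Lemma~\ref{lem:Phiineq}; the paper justifies that chain rule by Steklov averages in time rather than mollification, which is the same idea. The rest also matches: H\"older in the $L^{q_\theta'}$--$L^{q_\theta}$ duality, Young's inequality absorbing the dissipation with the same constant $K_{p,\theta}=\frac12S_\theta^2B_{p,\theta}^{2p}$, the comparison function $G$, and the blow-up alternative (S4).
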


Combining the two theorems brackets the lifespan between two computable
quantities.

\begin{corollary}[Two-sided estimate]\label{cor:twosided}
Under the assumptions of Theorems~\ref{thm:upper} and~\ref{thm:lower}, and
with $\beta$ and $\sigma$ chosen as in Theorem~\ref{thm:upper},
$$
\frac{1}{(p-1)K_{p,\theta}\bigl(\n{u_{1}}^{2}+\n{\nabla u_{0}}^{2}\bigr)^{p-1}}
\ \le\ T_{\max}\ \le\
\frac{\n{u_{0}}^{2}+\beta\sigma^{2}}
{\frac{p-1}{2}\bigl[(u_{0},u_{1})+\beta\sigma\bigr]
-\n{A^{\theta/2}u_{0}}^{2}} .
$$
\end{corollary}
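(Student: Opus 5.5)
The two-sided estimate follows immediately from the two main theorems, so the plan is simply to check that their hypotheses hold simultaneously and then concatenate the conclusions. Under the combined assumptions, the data satisfy \eqref{eq:Hp}, \eqref{eq:datahyp}, and, when $n\ge3$, \eqref{eq:plower}. Also $\beta$ satisfies \eqref{eq:beta} and $\sigma>\sigma_{\theta}$. We first note that $(u_{0},u_{1})\neq(0,0)$, which Theorem~\ref{thm:lower} requires: indeed $I(u_{0})<0$ forces $\n{u_{0}}_{p+1}^{p+1}>\n{\nabla u_{0}}^{2}\ge0$, so $u_{0}\neq0$. In particular $\Phi(0)\ge\n{\nabla u_{0}}^{2}>0$, and the left-hand side of the bracket is a finite positive number.

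Next, let $u$ be an energy solution of \eqref{eq:main} on $[0,T_{\max})$ in the sense of Definition~\ref{def:solution}. Theorem~\ref{thm:upper} gives the strict inequality \eqref{eq:sigma}, so the denominator on the right is positive. It also shows that $T_{\max}<\infty$ and yields the upper bound \eqref{eq:upperbound}. Theorem~\ref{thm:lower} applies to the same solution, with no energy assumption needed, and gives \eqref{eq:lowerbound}, where $\Phi(0)=\n{u_{1}}^{2}+\n{\nabla u_{0}}^{2}$. Chaining the two inequalities through $T_{\max}$ gives the displayed bracket.

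There is no real obstacle here. The only points needing care are the nondegeneracy $u_{0}\neq0$ just checked, and the observation that both theorems concern the same maximal time $T_{\max}$ of the same energy solution, which holds because (S4) fixes $T_{\max}$ intrinsically. As a byproduct, the bracket is nonempty, so it implicitly constrains the admissible data. No additional computation is required.
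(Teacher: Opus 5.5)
Your proposal is correct and matches the paper, which gives no separate proof and simply combines Theorems~\ref{thm:upper} and~\ref{thm:lower}, applied to the same energy solution and chained through $T_{\max}$. Your check that $I(u_{0})<0$ forces $u_{0}\neq0$, hence $\Phi(0)>0$, is a harmless extra, since $(u_{0},u_{1})\neq(0,0)$ is already among the hypotheses of Theorem~\ref{thm:lower} that the corollary assumes.
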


Both ends of the bracket are computable from the initial data alone, given
the first Dirichlet eigenvalue and the three embedding constants
\eqref{eq:Sstar} and \eqref{eq:constants}. The left-hand end is a time
before which the model is guaranteed to remain valid, and the right-hand end
a time by which failure has certainly occurred; the width of the interval
measures how much the two mechanisms of proof leave undetermined.

\begin{remark}\label{rem:comments}
(i) If $E(0)<0$, then \eqref{eq:datahyp} holds automatically. Indeed
$E(0)<0$ gives
$\frac{1}{p+1}\n{u_{0}}_{p+1}^{p+1}>\frac12\n{u_{1}}^{2}
+\frac12\n{\nabla u_{0}}^{2}\ge\frac12\n{\nabla u_{0}}^{2}$, so that
$\n{u_{0}}_{p+1}^{p+1}>\frac{p+1}{2}\n{\nabla u_{0}}^{2}
\ge\n{\nabla u_{0}}^{2}$ because $p\ge1$; hence $I(u_{0})<0$, and
$E(0)<0<d$. Mechanically, this is the regime in which the work available
from the load already exceeds the stored elastic energy at the initial
instant.

(ii) By \eqref{eq:interp} one has $\n{A^{\theta/2}u_{0}}^{2}
\le\n{u_{0}}^{2(1-\theta)}\n{\nabla u_{0}}^{2\theta}$, so the right-hand
side of \eqref{eq:upperbound} is fully explicit in $\theta$.

(iii) The restriction \eqref{eq:plower} interpolates linearly between the
classical ranges $p\le\frac{n}{n-2}$ for frictional damping and
$p\le\frac{n+2}{n-2}$ for Kelvin--Voigt damping. The smoothing strength of
the structural damping is thus quantitatively visible in the class of
nonlinearities for which an explicit lower bound of the lifespan is
available: the stiffer the internal dissipation, the more strongly
superlinear the load may be while a guaranteed safe interval can still be
computed. We are not aware of an explicit estimate of the form
\eqref{eq:lowerbound} in the remaining range
$\frac{n+2\theta}{n-2}<p<\frac{n+2}{n-2}$, $\theta\in[0,1)$.

(iv) We emphasise what \eqref{eq:plower} does and does not say. It widens
the set of exponents for which a lower bound is available as $\theta$
increases. It does not assert that the number on the right of
\eqref{eq:lowerbound} increases with $\theta$: the constant $K_{p,\theta}$
is a product of the two embedding norms \eqref{eq:constants}, whose
monotonicity in $\theta$ we do not establish.

(v) In \eqref{eq:upperbound} the parameter $\beta$ is fixed first, inside
$(0,2(d-E(0))]$, and only then is $\sigma$ chosen; the bound is therefore a
one-parameter family, indexed by $\sigma>\sigma_{\theta}$ with $\beta$
prescribed, rather than the outcome of a joint optimisation in
$(\beta,\sigma)$. Nothing prevents minimising the right-hand side of
\eqref{eq:upperbound} numerically over the admissible pairs, which is what
one would do in practice to sharpen a predicted failure time.
\end{remark}

\section{Auxiliary results}\label{sec:auxiliary}

This section collects the material needed in the two proofs, in the order in
which it will be used: two spectral inequalities, the embeddings behind the
constants \eqref{eq:constants}, the parameter ranges in which energy
solutions exist, and finally one lemma for each main theorem.

\begin{lemma}\label{lem:interp}
For every $\theta\in[0,1]$ and every $v\in H_{0}^{1}(\Omega)$,
\begin{equation}
\n{A^{\theta/2}v}^{2}\le\n{A^{1/2}v}^{2\theta}\n{v}^{2(1-\theta)}
\le\max\bigl\{\n{v}^{2},\n{\nabla v}^{2}\bigr\},
\label{eq:interp}
\end{equation}
and
\begin{equation}
\n{A^{\theta/2}v}\le\lambda_{1}^{-(1-\theta)/2}\n{\nabla v},
\qquad
\n{v}\le\lambda_{1}^{-\theta/2}\n{A^{\theta/2}v}.
\label{eq:poincare}
\end{equation}
\end{lemma}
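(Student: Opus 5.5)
Everything reduces to the eigenexpansion \eqref{eq:powers}. For $v\in H_{0}^{1}(\Omega)$ write $c_{k}:=(v,e_{k})$. Then
$$
\n{A^{s/2}v}^{2}=\sum_{k\ge1}\lambda_{k}^{s}c_{k}^{2}
\qquad\text{for } s\in[0,1],
$$
and in particular $\n{v}^{2}=\sum_{k}c_{k}^{2}$ and $\n{A^{1/2}v}^{2}=\n{\nabla v}^{2}=\sum_{k}\lambda_{k}c_{k}^{2}$. All series converge because $v\in D(A^{1/2})$.

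For the first inequality in \eqref{eq:interp}, I would dispose of the endpoints $\theta\in\{0,1\}$ first, where it is an identity. For $\theta\in(0,1)$, split $\lambda_{k}^{\theta}c_{k}^{2}=(\lambda_{k}c_{k}^{2})^{\theta}(c_{k}^{2})^{1-\theta}$. Apply the discrete Hölder inequality with exponents $1/\theta$ and $1/(1-\theta)$ to get
$$
\sum_{k}\lambda_{k}^{\theta}c_{k}^{2}\le\Bigl(\sum_{k}\lambda_{k}c_{k}^{2}\Bigr)^{\theta}\Bigl(\sum_{k}c_{k}^{2}\Bigr)^{1-\theta}.
$$
Set $a:=\n{\nabla v}^{2}$ and $b:=\n{v}^{2}$. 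The second inequality in \eqref{eq:interp} is then $a^{\theta}b^{1-\theta}\le\max\{a,b\}$, which is immediate: each factor is bounded by the corresponding power of $\max\{a,b\}$. Alternatively, weighted AM--GM gives $a^{\theta}b^{1-\theta}\le\theta a+(1-\theta)b\le\max\{a,b\}$.

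For \eqref{eq:poincare}, use $\lambda_{k}\ge\lambda_{1}>0$ termwise. For the first bound,
$$
\lambda_{k}^{\theta}=\lambda_{k}\lambda_{k}^{-(1-\theta)}\le\lambda_{1}^{-(1-\theta)}\lambda_{k},
$$
since $1-\theta\ge0$. Multiplying by $c_{k}^{2}$, summing and taking square roots gives $\n{A^{\theta/2}v}\le\lambda_{1}^{-(1-\theta)/2}\n{\nabla v}$. For the second bound, $c_{k}^{2}=\lambda_{k}^{-\theta}\lambda_{k}^{\theta}c_{k}^{2}\le\lambda_{1}^{-\theta}\lambda_{k}^{\theta}c_{k}^{2}$. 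Summing gives $\n{v}^{2}\le\lambda_{1}^{-\theta}\n{A^{\theta/2}v}^{2}$, and again we take square roots. As stated in the lemma this is for $v\in H_{0}^{1}(\Omega)$, but the argument works for any $v\in D(A^{\theta/2})$; that version is what is needed for the identification $S_{\theta}=\lambda_{1}^{-\theta/2}$ when $q_{\theta}=2$.

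There is no real obstacle here. The only points needing care are the degenerate exponents $\theta\in\{0,1\}$ in Hölder, handled separately above, and justifying the identification $\n{A^{1/2}v}=\n{\nabla v}$. That identification is recorded after \eqref{eq:powers}, with $D(A^{1/2})=H_{0}^{1}(\Omega)$ holding for arbitrary bounded open $\Omega$ via the form definition of the Dirichlet Laplacian. No boundary regularity enters, consistent with Remark~\ref{rem:noboundary}.
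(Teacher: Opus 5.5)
Your proposal is correct and follows the paper's proof: Hölder with exponents $1/\theta$ and $1/(1-\theta)$ on the spectral sum, with the endpoints $\theta\in\{0,1\}$ trivial, and the termwise bounds $\lambda_{k}^{\theta}\le\lambda_{1}^{-(1-\theta)}\lambda_{k}$ and $1\le\lambda_{1}^{-\theta}\lambda_{k}^{\theta}$ for \eqref{eq:poincare}. Your extra observation that the second inequality in \eqref{eq:poincare} holds on all of $D(A^{\theta/2})$ is correct, and it is the form actually needed to identify $S_{\theta}=\lambda_{1}^{-\theta/2}$ (with equality attained at $v=e_{1}$).
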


\begin{proof}
Write $v=\sum_{k}v_{k}e_{k}$. For $\theta\in(0,1)$, H\"older's inequality
with exponents $\frac1\theta$ and $\frac{1}{1-\theta}$ applied to the
spectral sum \eqref{eq:powers} gives
$$
\n{A^{\theta/2}v}^{2}=\sum_{k}\lambda_{k}^{\theta}v_{k}^{2}
=\sum_{k}(\lambda_{k}v_{k}^{2})^{\theta}(v_{k}^{2})^{1-\theta}
\le\Bigl(\sum_{k}\lambda_{k}v_{k}^{2}\Bigr)^{\theta}
\Bigl(\sum_{k}v_{k}^{2}\Bigr)^{1-\theta}
=\n{A^{1/2}v}^{2\theta}\n{v}^{2(1-\theta)},
$$
the cases $\theta\in\{0,1\}$ being trivial; the second inequality in
\eqref{eq:interp} is immediate. For \eqref{eq:poincare} use
$\lambda_{k}^{\theta}\le\lambda_{1}^{-(1-\theta)}\lambda_{k}$ and
$1\le\lambda_{1}^{-\theta}\lambda_{k}^{\theta}$ respectively.
\end{proof}

Each of these inequalities is used more than once. The first makes the
$\theta$-dependence of Theorem~\ref{thm:upper} visible and, through the bound
by $N_{0}$, yields Corollary~\ref{cor:uniform}. The first inequality in
\eqref{eq:poincare} is what shows, in the last step of the proof of
Theorem~\ref{thm:upper}, that the functional \eqref{eq:M} stays bounded as
long as the solution does; it also gives the boundedness of the bilinear form
used in Proposition~\ref{prop:local}. The second inequality in
\eqref{eq:poincare} says exactly that
$D(A^{\theta/2})\hookrightarrow L^{2}(\Omega)$ with norm
$\lambda_{1}^{-\theta/2}$, which is the value of $S_{\theta}$ recorded for
$n\le2$ in Theorem~\ref{thm:lower}.

For $n\ge3$ the constant $S_{\theta}$ of \eqref{eq:constants} comes from a
fractional embedding. The next remark records that it is finite and that it
requires no regularity of the boundary, which matters for applications, since
the domains occurring in structural models are rarely smooth.

\begin{remark}\label{rem:noboundary}
For $n\ge3$ and $0\le\theta\le1<\frac{n}{2}$ we use
\begin{equation}
D(A^{\theta/2})\hookrightarrow L^{q_{\theta}}(\Omega),
\qquad q_{\theta}=\frac{2n}{n-2\theta},
\label{eq:embed}
\end{equation}
whose norm is the quantity $S_{\theta}$ of \eqref{eq:constants}. No
regularity of $\partial\Omega$ is needed for \eqref{eq:embed}. Indeed the
Dirichlet heat semigroup on an arbitrary open set is dominated by the
Gaussian one, so that
$\n{e^{-tA}}_{L^{1}\to L^{\infty}}\le(4\pi t)^{-n/2}$, and by the
equivalence between such ultracontractivity bounds and Sobolev inequalities
for fractional powers of the generator \cite{Varopoulos1985,Davies1989},
$A^{-\theta/2}$ is bounded from $L^{2}(\Omega)$ to $L^{q}(\Omega)$ with
$\frac1q=\frac12-\frac\theta n$, that is $q=q_{\theta}$, with a constant
depending only on $n$ and $\theta$. Likewise
$H_{0}^{1}(\Omega)\hookrightarrow L^{r}(\Omega)$ for
$2\le r\le\frac{2n}{n-2}$ holds for any open $\Omega$ by extension by zero,
so that $B_{p,\theta}<\infty$; note that
\begin{equation}
p\,q_{\theta}'\le\frac{2n}{n-2}
\quad\Longleftrightarrow\quad
p\le\frac{n+2\theta}{n-2},
\label{eq:exprange}
\end{equation}
which is precisely \eqref{eq:plower}: the exponent condition of
Theorem~\ref{thm:lower} amounts to the requirement that $|u|^{p-1}u$ be
integrable against $u_{t}$ in the duality
$L^{q_{\theta}'}$--$L^{q_{\theta}}$, that is, that the power supplied by the
load be finite in the norm controlled by the damping. Boundary regularity is
used only in Proposition~\ref{prop:local}. We never identify
$D(A^{\theta/2})$ with a fractional Sobolev space; for smooth boundary such
identifications are available, but they require care, in particular at the
borderline $\theta=\frac12$, where Lions--Magenes spaces intervene
\cite{Fujiwara1967,LionsMagenes1972}.
\end{remark}

Theorems~\ref{thm:upper} and~\ref{thm:lower} apply in every parameter range
in which a local energy solution satisfying (S1)--(S4) is supplied by an
available well-posedness theory. We record two such situations; outside them
we make no claim.

\begin{proposition}\label{prop:local}
Assume $\partial\Omega\in C^{1,1}$ and
$(u_{0},u_{1})\in H_{0}^{1}(\Omega)\times L^{2}(\Omega)$.
\begin{enumerate}
\item[(i)] Let $\theta\in[0,1]$ and suppose that $n\le2$ and $p>1$, or that
$n\ge3$ and $1<p\le\frac{n}{n-2}$. Then \eqref{eq:main} has a unique
maximal energy solution in the sense of Definition~\ref{def:solution}, and
(S3) holds with equality.
\item[(ii)] Let $\theta=1$ and let $p$ satisfy \eqref{eq:Hp}. Then
\eqref{eq:main} has a maximal energy solution; this is
\cite{GazzolaSquassina2006}.
\end{enumerate}
\end{proposition}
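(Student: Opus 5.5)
For (ii) there is nothing to prove beyond the citation: Gazzola and Squassina \cite{GazzolaSquassina2006} construct, for $\theta=1$ and $p$ as in \eqref{eq:Hp}, a maximal weak solution with exactly the properties (S1)--(S4). So the plan concerns (i), which I would prove by the standard semigroup-plus-Lipschitz-perturbation route on the phase space $\mathcal H:=H_{0}^{1}(\Omega)\times L^{2}(\Omega)$. This space carries the norm $\n{(u,v)}_{\mathcal H}^{2}=\n{\nabla u}^{2}+\n{v}^{2}$, whose square along a trajectory is exactly $\Phi$.

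\emph{Linear part.} I would set $\mathcal A(u,v):=(v,\,-Au-A^{\theta}v)$ with domain $D(\mathcal A)=\{(u,v)\in\mathcal H:\ v\in H_{0}^{1}(\Omega),\ Au+A^{\theta}v\in L^{2}(\Omega)\}$. Dissipativity is immediate:
$(\mathcal A(u,v),(u,v))_{\mathcal H}=-\n{A^{\theta/2}v}^{2}\le0$.
Maximality reduces to solving $\lambda v+Au+A^{\theta}v=g$ after eliminating $u=\lambda^{-1}(v+f)$. That problem is coercive on $H_{0}^{1}(\Omega)$ and is handled by Lax--Milgram, since the form $(A^{\theta/2}v,A^{\theta/2}w)$ is bounded on $H_{0}^{1}(\Omega)$ by the first inequality in \eqref{eq:poincare}. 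Lumer--Phillips then gives a contraction semigroup on $\mathcal H$.

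\emph{Nonlinear part.} Under $p\le\frac{n}{n-2}$ one has $2p\le\frac{2n}{n-2}$, so $H_{0}^{1}(\Omega)\hookrightarrow L^{2p}(\Omega)$, and for $n\le2$ this holds for every $p$. Hölder's inequality then shows that $u\mapsto|u|^{p-1}u$ is Lipschitz from bounded sets of $H_{0}^{1}(\Omega)$ into $L^{2}(\Omega)$. Hence $F(u,v):=(0,|u|^{p-1}u)$ is locally Lipschitz on $\mathcal H$. The standard theory (Pazy) yields a unique maximal mild solution in $C([0,T);\mathcal H)$. It also gives the blow-up alternative: if $T<\infty$ then $\n{(u,u_{t})}_{\mathcal H}\to\infty$. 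This is (S4) in the stronger form $\lim\Phi=\infty$. Together with the trajectory regularity it gives (S1) except for the integrability of $u_{t}$, and gives the initial conditions in (S2).

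\emph{Energy identity and weak form.} This is the step I expect to require the most care. For data in $D(\mathcal A)$ the solution is classical, because $F$ is locally Lipschitz and the perturbation theory gives classical solutions for domain data. For such solutions I would test the equation with $u_{t}$ to obtain the energy identity
$E(t)+\int_{0}^{t}\n{A^{\theta/2}u_{s}}^{2}\ds=E(0)$.
I would then approximate general data in $\mathcal H$ by domain data, using continuous dependence on compact time intervals inside $[0,T)$. This passes the identity to the limit, since $E$ is continuous on $\mathcal H$ by the embedding into $L^{p+1}$. It also shows that the approximating $u_{t}$ converge in $L^{2}_{\mathrm{loc}}([0,T);D(A^{\theta/2}))$, because the dissipation integrals form a Cauchy sequence. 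This gives the missing part of (S1) and (S3) with equality.

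Finally, for (S2), I would read off $u_{tt}=-Au-A^{\theta}u_{t}+|u|^{p-1}u$ in the distributional sense. Each term lies in $L^{2}_{\mathrm{loc}}([0,T);H^{-1}(\Omega))$:
\begin{itemize}
\item $Au\in C([0,T);H^{-1})$;
\item $A^{\theta}u_{t}\in L^{2}_{\mathrm{loc}}(D(A^{-\theta/2}))\subset L^{2}_{\mathrm{loc}}(H^{-1})$, since $\theta/2\le\frac12$;
\item $|u|^{p-1}u\in C([0,T);L^{2})$.
\end{itemize}
Pairing with $v\in H_{0}^{1}(\Omega)$ gives the weak identity for a.e.\ $t$. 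Boundary regularity $C^{1,1}$ is not really needed in this argument, and I would only note that it makes $D(\mathcal A)$ concrete.
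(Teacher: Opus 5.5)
Your proposal is correct and follows the paper's proof essentially step for step. Both use the same operator $\mathcal{A}$ on $H_0^1(\Omega)\times L^2(\Omega)$, shown m-dissipative via Lax--Milgram and Lumer--Phillips; eliminating $u$ rather than $v$ in the resolvent system is immaterial. Both derive local Lipschitz continuity of $F$ under $p\le\frac{n}{n-2}$; your H\"older pair $(\frac{2p}{p-1},2p)$ gives the same condition as the paper's $(n,\frac{2n}{n-2})$. Both then invoke the standard semilinear theory with its continuation criterion, and prove the energy identity for domain data before extending it by density.

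Your handling of $u_t\in L^2_{\mathrm{loc}}([0,T);D(A^{\theta/2}))$ is more explicit than the paper's. Make sure, though, that the Cauchy property refers to $\int_0^t\n{A^{\theta/2}(u^k_s-u^m_s)}^2\ds$. This quantity tends to zero by the energy identity for the difference equation $W'=\mathcal{A}W+F(U^k)-F(U^m)$ together with the local Lipschitz bound on $F$. Convergence of the real numbers $\int_0^t\n{A^{\theta/2}u^k_s}^2\ds$ alone would not give it.
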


\begin{proof}
Only (i) requires proof, and we divide it into three steps.

\emph{Step 1: the linear operator.} Set
$\mathcal{H}:=H_{0}^{1}(\Omega)\times L^{2}(\Omega)$, normed by
$\n{(u,v)}_{\mathcal{H}}^{2}=\n{\nabla u}^{2}+\n{v}^{2}$. For
$u,v\in H_{0}^{1}(\Omega)$ we understand $Au$ and $A^{\theta}v$ as elements
of $H^{-1}(\Omega)$ through the closed forms
\begin{equation}
\langle Au,w\rangle:=(\nabla u,\nabla w),
\qquad
\langle A^{\theta}v,w\rangle:=\bigl(A^{\theta/2}v,A^{\theta/2}w\bigr),
\qquad w\in H_{0}^{1}(\Omega),
\label{eq:forms}
\end{equation}
which is legitimate because $H_{0}^{1}(\Omega)\subset D(A^{\theta/2})$ and,
by \eqref{eq:poincare},
$|\langle A^{\theta}v,w\rangle|\le\lambda_{1}^{-(1-\theta)}
\n{\nabla v}\n{\nabla w}$. We then put
$$
\mathcal{A}(u,v):=(v,-Au-A^{\theta}v),
\qquad
D(\mathcal{A}):=\bigl\{(u,v)\in H_{0}^{1}(\Omega)\times H_{0}^{1}(\Omega):
Au+A^{\theta}v\in L^{2}(\Omega)\bigr\},
$$
the membership in $D(\mathcal{A})$ meaning that the functional
$Au+A^{\theta}v\in H^{-1}(\Omega)$ is represented by an element of
$L^{2}(\Omega)$; only the sum is required to have this property, not the two
terms separately. Since $D(A)\times D(A)\subset D(\mathcal{A})$ and $D(A)$
is dense in both $H_{0}^{1}(\Omega)$ and $L^{2}(\Omega)$, the operator
$\mathcal{A}$ is densely defined. It is dissipative: for
$(u,v)\in D(\mathcal{A})$, using \eqref{eq:forms},
\begin{align*}
\bigl\langle\mathcal{A}(u,v),(u,v)\bigr\rangle_{\mathcal{H}}
&=(\nabla v,\nabla u)-\bigl(Au+A^{\theta}v,\,v\bigr)\\
&=(\nabla v,\nabla u)-(\nabla u,\nabla v)-\n{A^{\theta/2}v}^{2}
=-\n{A^{\theta/2}v}^{2}\le0 ,
\end{align*}
the right-hand side being the instantaneous rate of energy dissipation.

\emph{Step 2: the range condition.} We claim that
$\Ran(\lambda\Id-\mathcal{A})=\mathcal{H}$ for every $\lambda>0$. Let
$(f,g)\in\mathcal{H}$. Solving $(\lambda\Id-\mathcal{A})(u,v)=(f,g)$ means
\begin{equation}
\lambda u-v=f,
\qquad
\lambda v+Au+A^{\theta}v=g .
\label{eq:resolvent}
\end{equation}
Eliminating $v=\lambda u-f$ leads to the variational problem
$$
\mathcal{B}(u,w)=\mathcal{L}(w)\quad\text{for all }w\in H_{0}^{1}(\Omega),
$$
where
$$
\mathcal{B}(u,w):=\lambda^{2}(u,w)+(\nabla u,\nabla w)
+\lambda\bigl(A^{\theta/2}u,A^{\theta/2}w\bigr),
\qquad
\mathcal{L}(w):=(g+\lambda f,w)
+\bigl(A^{\theta/2}f,A^{\theta/2}w\bigr).
$$
The form $\mathcal{B}$ is bilinear and bounded on
$H_{0}^{1}(\Omega)\times H_{0}^{1}(\Omega)$ by \eqref{eq:poincare} and the
Poincar\'e inequality, and it is coercive, since
$\mathcal{B}(u,u)\ge\n{\nabla u}^{2}$. The functional $\mathcal{L}$ is
bounded on $H_{0}^{1}(\Omega)$, because $g\in L^{2}(\Omega)$,
$f\in H_{0}^{1}(\Omega)\subset L^{2}(\Omega)$ and
$|(A^{\theta/2}f,A^{\theta/2}w)|\le\lambda_{1}^{-(1-\theta)}
\n{\nabla f}\n{\nabla w}$. By the Lax--Milgram theorem there is a unique
$u\in H_{0}^{1}(\Omega)$ with $\mathcal{B}(u,\cdot)=\mathcal{L}$. Put
$v:=\lambda u-f\in H_{0}^{1}(\Omega)$. Then, for every
$w\in H_{0}^{1}(\Omega)$,
\begin{align*}
\bigl\langle Au+A^{\theta}v,w\bigr\rangle
&=(\nabla u,\nabla w)+\lambda\bigl(A^{\theta/2}u,A^{\theta/2}w\bigr)
-\bigl(A^{\theta/2}f,A^{\theta/2}w\bigr)\\
&=\mathcal{B}(u,w)-\lambda^{2}(u,w)
-\bigl(A^{\theta/2}f,A^{\theta/2}w\bigr)
=(g,w)-\lambda(\lambda u-f,w)=(g-\lambda v,w).
\end{align*}
Hence $Au+A^{\theta}v=g-\lambda v$, which lies in $L^{2}(\Omega)$ because
$g\in L^{2}(\Omega)$ and $v\in H_{0}^{1}(\Omega)$. Therefore
$(u,v)\in D(\mathcal{A})$ and \eqref{eq:resolvent} holds, which proves the
claim. By the Lumer--Phillips theorem \cite[Ch.~1]{Pazy1983},
\cite[Ch.~2]{CazenaveHaraux1998}, $\mathcal{A}$ generates a $C_{0}$
semigroup of contractions on $\mathcal{H}$.

\emph{Step 3: the nonlinearity and the regularity of the solution.} Write
\eqref{eq:main} as $U'=\mathcal{A}U+F(U)$ with $U=(u,u_{t})$ and
$F(u,v):=(0,|u|^{p-1}u)$. The map $F$ is Lipschitz on bounded subsets of
$\mathcal{H}$. Indeed, from
$\bigl||a|^{p-1}a-|b|^{p-1}b\bigr|\le
C_{p}\bigl(|a|^{p-1}+|b|^{p-1}\bigr)|a-b|$ and H\"older's inequality with
exponents $n$ and $\frac{2n}{n-2}$, whose reciprocals sum to $\frac12$,
$$
\n{|u|^{p-1}u-|w|^{p-1}w}
\le C_{p}\bigl\|\,|u|^{p-1}+|w|^{p-1}\bigr\|_{n}\,
\n{u-w}_{\frac{2n}{n-2}},
$$
and $\bigl\||u|^{p-1}\bigr\|_{n}=\n{u}_{n(p-1)}^{p-1}$ is controlled by
$\n{\nabla u}^{p-1}$ precisely when $n(p-1)\le\frac{2n}{n-2}$, that is when
$p\le\frac{n}{n-2}$. For $n\le2$ the same computation works with the pair of
exponents $(4,4)$ in place of $\bigl(n,\frac{2n}{n-2}\bigr)$, since
$H_{0}^{1}(\Omega)\hookrightarrow L^{r}(\Omega)$ for every $r<\infty$ when
$n=2$ and $H_{0}^{1}(\Omega)\hookrightarrow L^{\infty}(\Omega)$ when $n=1$.

Hence $F:\mathcal{H}\to\mathcal{H}$ is locally Lipschitz, and the standard
theory of semilinear equations with locally Lipschitz nonlinearities
\cite[Ch.~4]{CazenaveHaraux1998}, \cite[Ch.~6]{Pazy1983} provides a unique
maximal mild solution $U=(u,u_{t})\in C([0,T_{\max});\mathcal{H})$, that is
$$
u\in C\bigl([0,T_{\max});H_{0}^{1}(\Omega)\bigr),
\qquad
u_{t}\in C\bigl([0,T_{\max});L^{2}(\Omega)\bigr),
$$
together with the continuation criterion: if $T_{\max}<\infty$ then
$\limsup_{t\to T_{\max}^{-}}\n{U(t)}_{\mathcal{H}}=\infty$. Since
$\n{U(t)}_{\mathcal{H}}^{2}=\Phi(t)$, this is precisely (S4). For initial
data in $D(\mathcal{A})$ the solution is strong and the energy identity
follows by testing the equation with $u_{t}$; approximation and continuous
dependence on the data extend it to general data in $\mathcal{H}$, so that
(S3) holds with equality. Consequently
$\int_{0}^{t}\n{A^{\theta/2}u_{s}}^{2}\ds=E(0)-E(t)<\infty$ on compact
subintervals, that is $u_{t}\in L^{2}_{\mathrm{loc}}
\bigl([0,T_{\max});D(A^{\theta/2})\bigr)$, which completes (S1); and the
equation itself, together with $Au\in C([0,T_{\max});H^{-1}(\Omega))$,
$A^{\theta}u_{t}\in L^{2}_{\mathrm{loc}}([0,T_{\max});H^{-1}(\Omega))$ and
$|u|^{p-1}u\in C([0,T_{\max});H^{-1}(\Omega))$, gives
$u_{tt}\in L^{2}_{\mathrm{loc}}([0,T_{\max});H^{-1}(\Omega))$ and hence
(S2).
\end{proof}

\begin{remark}\label{rem:lwpdiscussion}
For $n\ge3$ and $\frac{n}{n-2}<p<\frac{n+2}{n-2}$, the case
$\theta\in[\frac12,1)$ is not covered by Proposition~\ref{prop:local}, and
we do not claim it. The semigroup generated by $\mathcal{A}$ is analytic in
that range \cite{ChenTriggiani1989}, and the associated smoothing makes a
corresponding local well-posedness statement plausible; results of this type
are available for related quasilinear models \cite{DingZhou2022}. Carrying
this out would require specifying the fractional-domain smoothing estimate,
the fixed-point space and the time-integrability exponent, and re-deriving
the continuation criterion, and we prefer not to assert it without proof.
The case $\theta\in[0,\frac12)$ is harder still: the smoothing furnished by
$A^{\theta}u_{t}$ is then of Gevrey and not of analytic type
\cite{ChenTriggiani1990}, and it disappears at $\theta=0$, where uniqueness
of energy-class solutions in the subcritical range rests instead on
Strichartz estimates, delicate on a bounded domain even for smooth boundary
\cite{BurqLebeauPlanchon2008}. Existence of some solution satisfying
(S1)--(S3) can be obtained in wide generality by Faedo--Galerkin
approximation \cite{Lions1969,Ikehata1996,Vitillaro1999,DingZhou2022}, the
energy inequality (S3) being preserved in the limit by weak lower
semicontinuity of the norms on its left-hand side; what such an argument
does not by itself deliver is uniqueness or the continuation criterion (S4).
This is why Definition~\ref{def:solution} isolates the properties actually
used: the two main theorems hold for any solution in that class, and
presuppose neither uniqueness nor the energy identity as an equality. In
particular, the lifespan bracket of Corollary~\ref{cor:twosided} is
available for any solution produced by a Galerkin or finite element
discretisation whose limit obeys the energy inequality, which is the
situation met in practice.
\end{remark}

It remains to prepare the two proofs. Each rests on exactly one lemma, and
we give them in the order in which the theorems are proved. The first lemma
prepares Theorem~\ref{thm:upper}: it says that the hypotheses
\eqref{eq:datahyp} propagate in time, which is what turns the concavity
computation of Section~\ref{sec:upper} into a strict differential
inequality.

\begin{lemma}[Invariance of the unstable set]\label{lem:invariance}
Assume \eqref{eq:Hp} and \eqref{eq:datahyp}, and let $u$ be an energy
solution on $[0,T_{\max})$. Then, for all $t\in[0,T_{\max})$,
\begin{equation}
I(u(t))<0
\qquad\text{and}\qquad
\n{\nabla u(t)}^{2}>\frac{2(p+1)}{p-1}\,d .
\label{eq:invariance}
\end{equation}
\end{lemma}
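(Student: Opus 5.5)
The plan is the classical Payne--Sattinger continuity argument, run with only the energy inequality (S3) and the continuity in (S1).

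\emph{Step 1: static facts.} I first record two inequalities valid for every $v\in H_{0}^{1}(\Omega)$ with $I(v)<0$. Since $\n{\nabla v}^{2}<\n{v}_{p+1}^{p+1}\le S_{*}^{p+1}\n{\nabla v}^{p+1}$, we get $v\neq0$ and
$$
\n{\nabla v}^{2}>S_{*}^{-\frac{2(p+1)}{p-1}}=\frac{2(p+1)}{p-1}\,d .
$$
This already yields the second half of \eqref{eq:invariance} at every time where the first half holds. Next I note that $\n{\nabla v}^{2}\ge\frac{2(p+1)}{p-1}d$ forces $J(v)\ge d$ when $I(v)=0$, since then
$$
J(v)=\Bigl(\tfrac12-\tfrac1{p+1}\Bigr)\n{\nabla v}^{2}=\tfrac{p-1}{2(p+1)}\n{\nabla v}^{2}.
$$
Alternatively, $J(v)\ge d$ on the Nehari manifold $\{I=0,\ v\neq0\}$ follows directly from the definition \eqref{eq:depth}.

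\emph{Step 2: continuity argument.} Suppose the first claim fails. Since $t\mapsto u(t)\in H_{0}^{1}(\Omega)$ is continuous by (S1), and $H_{0}^{1}\hookrightarrow L^{p+1}$ under \eqref{eq:Hp}, the map $t\mapsto I(u(t))$ is continuous. Hence there is a first time $t_{0}\in(0,T_{\max})$ with $I(u(t_{0}))=0$ and $I(u(t))<0$ on $[0,t_{0})$. By Step 1, $\n{\nabla u(t)}^{2}>\frac{2(p+1)}{p-1}d$ on $[0,t_{0})$, and by continuity this persists in the limit, so $\n{\nabla u(t_{0})}^{2}\ge\frac{2(p+1)}{p-1}d>0$. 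In particular $u(t_{0})\neq0$ lies on the Nehari manifold, and therefore $J(u(t_{0}))\ge d$. On the other hand, (S3) gives
$$
J(u(t_{0}))\le E(t_{0})\le E(0)-\int_{0}^{t_{0}}\n{A^{\theta/2}u_{s}}^{2}\ds\le E(0)<d ,
$$
which is a contradiction. So $I(u(t))<0$ for all $t$, and Step 1 then gives the strict gradient bound for all $t$.

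\emph{Main obstacle.} The delicate point is the nondegeneracy $u(t_{0})\neq0$ at the crossing time. Without it, $u(t_{0})$ could be zero, which is not admissible in \eqref{eq:depth}. I handle this by passing to the limit in the strict lower bound from Step 1 rather than using the definition of $d$ naively. The only other care needed is that (S3) is merely an inequality, but it is applied here only in the direction $E(t)\le E(0)$, which is all that is required.
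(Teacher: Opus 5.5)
Your proposal is correct and is essentially the paper's proof: you use a first-crossing time where the paper shows that $U=\{t:\ I(u(t))<0\}$ is nonempty, open and closed in $[0,T_{\max})$. The key steps coincide. Both arguments use the strict gradient bound on $U$ and pass it to the limit to get $u(t_{0})\neq0$. Both then use $J\le E\le E(0)<d$ from (S3) to rule out $I(u(t_{0}))=0$.
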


\begin{proof}
By (S3), $J(u(t))\le E(t)\le E(0)<d$ for all $t\in[0,T_{\max})$. Let
$$
U:=\{t\in[0,T_{\max}):\ I(u(t))<0\}.
$$
The map $t\mapsto I(u(t))$ is continuous, by (S1) and the embedding
$H_{0}^{1}(\Omega)\hookrightarrow L^{p+1}(\Omega)$; hence $U$ is open in
$[0,T_{\max})$, and $0\in U$ by \eqref{eq:datahyp}.

We first note that
\begin{equation}
\n{\nabla u(t)}^{2}>S_{*}^{-\frac{2(p+1)}{p-1}}=\frac{2(p+1)}{p-1}\,d,
\qquad t\in U .
\label{eq:gradbelow}
\end{equation}
Indeed, for $t\in U$ we have $\n{\nabla u(t)}^{2}<\n{u(t)}_{p+1}^{p+1}\le
S_{*}^{p+1}\n{\nabla u(t)}^{p+1}$ by \eqref{eq:Sstar}, and
$\n{\nabla u(t)}>0$, since otherwise $I(u(t))=0$; so we may divide by
$\n{\nabla u(t)}^{2}$ and use $p>1$.

Next, $U$ is closed in $[0,T_{\max})$. Let $t_{j}\in U$ with
$t_{j}\to t\in[0,T_{\max})$. Passing to the limit in \eqref{eq:gradbelow}
gives $\n{\nabla u(t)}^{2}\ge\frac{2(p+1)}{p-1}d>0$, so $u(t)\neq0$, and
$I(u(t))\le0$ by continuity. Were $I(u(t))=0$, then $u(t)\neq0$ and
\eqref{eq:depth} would force $J(u(t))\ge d$, incompatible with
$J(u(t))<d$. Hence $I(u(t))<0$, that is $t\in U$.

Since $[0,T_{\max})$ is connected and $U$ is a nonempty subset of it which
is both open and closed, $U=[0,T_{\max})$. Together with
\eqref{eq:gradbelow} this is \eqref{eq:invariance}.
\end{proof}

The second lemma prepares Theorem~\ref{thm:lower}: it converts the energy
inequality (S3) into an integral inequality for $\Phi$. Two features of the
statement should be pointed out. First, it is stated in integrated form,
which is what allows the proof in Section~\ref{sec:lower} to avoid
differentiating $\n{\nabla u(t)}^{2}$, an operation not licensed by (S1)
when $\theta<1$. Second, the exponent condition \eqref{eq:plower}, imposed
in Theorem~\ref{thm:lower} because the estimate of Section~\ref{sec:lower}
needs it, turns out to be exactly what legitimises the differentiation of
$t\mapsto\n{u(t)}_{p+1}^{p+1}$ here.

\begin{lemma}[An integral inequality for $\Phi$]\label{lem:Phiineq}
Let $u$ be an energy solution on $[0,T)$ and assume \eqref{eq:plower} if
$n\ge3$, with no extra assumption if $n\le2$. Then, for every
$t\in[0,T)$,
\begin{equation}
\Phi(t)+2\int_{0}^{t}\n{A^{\theta/2}u_{s}}^{2}\ds
\ \le\ \Phi(0)+2\int_{0}^{t}\int_{\Omega}|u|^{p-1}u\,u_{s}\dx\ds ,
\label{eq:S4}
\end{equation}
all the integrals being absolutely convergent.
\end{lemma}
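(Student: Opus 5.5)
Lemma~\ref{lem:Phiineq} follows by rewriting the energy inequality (S3) in terms of $\Phi$ and then identifying the potential term through a chain rule. By \eqref{eq:energy}, $2E(t)=\Phi(t)-\frac{2}{p+1}\n{u(t)}_{p+1}^{p+1}$. Multiplying (S3) by $2$ therefore gives
$$
\Phi(t)+2\int_{0}^{t}\n{A^{\theta/2}u_{s}}^{2}\ds
\le\Phi(0)+\tfrac{2}{p+1}\Bigl(\n{u(t)}_{p+1}^{p+1}-\n{u_{0}}_{p+1}^{p+1}\Bigr).
$$
It then suffices to prove the identity
$$
\tfrac{1}{p+1}\Bigl(\n{u(t)}_{p+1}^{p+1}-\n{u_{0}}_{p+1}^{p+1}\Bigr)
=\int_{0}^{t}\int_{\Omega}|u|^{p-1}u\,u_{s}\dx\ds,
$$
together with absolute convergence of the right-hand side. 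The dissipation integral on the left is already finite, by (S1) or directly from (S3), since $E(t)$ is bounded below on compact intervals by continuity.

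\emph{Absolute convergence.} For $n\ge3$, H\"older's inequality in the $L^{q_{\theta}'}$--$L^{q_{\theta}}$ duality gives
$$
\Bigl|\int_{\Omega}|u|^{p-1}u\,u_{s}\dx\Bigr|\le\n{u}_{pq_{\theta}'}^{p}\n{u_{s}}_{q_{\theta}}
\le B_{p,\theta}^{p}S_{\theta}\n{\nabla u}^{p}\n{A^{\theta/2}u_{s}}.
$$
Here we use \eqref{eq:exprange}, which is exactly \eqref{eq:plower}, and the embeddings of Remark~\ref{rem:noboundary}. On $[0,t]$, $\n{\nabla u}$ is bounded by (S1), and $\n{A^{\theta/2}u_{s}}\in L^{2}\subset L^{1}$. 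Hence the integrand lies in $L^{1}(0,t)$. For $n\le2$ the same bound holds with $q_{\theta}=q_{\theta}'=2$, using $\n{u_{s}}\le\lambda_{1}^{-\theta/2}\n{A^{\theta/2}u_{s}}$ from \eqref{eq:poincare} and $H_{0}^{1}\hookrightarrow L^{2p}$.

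\emph{Chain rule (main obstacle).} The identity requires differentiating $t\mapsto\n{u(t)}_{p+1}^{p+1}$. Under (S1) we only know $u\in C^{1}([0,T);L^{2})$, and $u_{t}\in L^{2}_{\mathrm{loc}}(D(A^{\theta/2}))\subset L^{2}_{\mathrm{loc}}(L^{q_{\theta}})$. I would argue by regularization, mollifying in time (or using Steklov averages) $u^{h}:=\frac1h\int_{s}^{s+h}u\,d\tau$. For such smooth-in-time functions with values in $H_{0}^{1}\cap W^{1,1}(L^{q_{\theta}})$, the pointwise chain rule $\partial_{s}\frac{1}{p+1}|u^{h}|^{p+1}=|u^{h}|^{p-1}u^{h}\,u^{h}_{s}$ holds, and integrating in $x$ and $s$ gives the identity for $u^{h}$.

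To pass to the limit $h\to0$, note that $u^{h}\to u$ in $C([0,t];H_{0}^{1})$. Hence $|u^{h}|^{p-1}u^{h}\to|u|^{p-1}u$ in $C([0,t];L^{q_{\theta}'})$, by continuity of the Nemytskii map $L^{pq_{\theta}'}\to L^{q_{\theta}'}$. Also $u^{h}_{s}\to u_{s}$ in $L^{2}(0,t;L^{q_{\theta}})$. The product therefore converges in $L^{1}$, and the endpoint terms converge by $H_{0}^{1}\hookrightarrow L^{p+1}$. This is precisely where \eqref{eq:plower} is used. Alternatively, one can argue with the abstract chain rule for convex functionals, since $v\mapsto\frac{1}{p+1}\n{v}_{p+1}^{p+1}$ is $C^{1}$ on $L^{pq_{\theta}'}$ with derivative in $L^{q_{\theta}'}$, paired against $u_{s}\in L^{q_{\theta}}$. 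Substituting the identity into the doubled energy inequality yields \eqref{eq:S4}.
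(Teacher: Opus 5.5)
Your proposal is correct and follows the paper's proof: you rewrite (S3) via $2E=\Phi-\frac{2}{p+1}\n{u}_{p+1}^{p+1}$, obtain absolute convergence from H\"older's inequality in the $L^{q_\theta'}$--$L^{q_\theta}$ duality together with \eqref{eq:exprange}, and justify the chain rule for $\n{u}_{p+1}^{p+1}$ by Steklov averages. Your limit passage (uniform convergence of $|u^h|^{p-1}u^h$ in $L^{q_\theta'}$, paired with $L^2$-in-time convergence of $u^h_s$ in $L^{q_\theta}$) spells out what the paper only sketches. One small caveat concerns your closing aside: $v\mapsto\frac{1}{p+1}\n{v}_{p+1}^{p+1}$ is not finite on all of $L^{pq_\theta'}(\Omega)$ when $pq_\theta'<p+1$ (e.g.\ $n=3$, $\theta=1$, $p=2$), so that variant should be formulated on $H_0^1(\Omega)$ instead; the Steklov argument you actually rely on is unaffected.
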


\begin{proof}
Let $t\in[0,T)$ and put $q:=q_{\theta}$ and $q':=q_{\theta}'$ when $n\ge3$,
and $q=q'=2$ when $n\le2$. By (S1) and \eqref{eq:embed},
\begin{equation}
u\in L^{\infty}\bigl(0,t;L^{pq'}(\Omega)\bigr),
\qquad
u_{s}\in L^{2}\bigl(0,t;L^{q}(\Omega)\bigr),
\label{eq:twobounds}
\end{equation}
where \eqref{eq:exprange} and \eqref{eq:plower} are used for the first
membership. Hence, by H\"older's inequality in $x$ and then in $s$,
\begin{equation}
\int_{0}^{t}\Bigl|\int_{\Omega}|u|^{p-1}u\,u_{s}\dx\Bigr|\ds
\le\int_{0}^{t}\n{u}_{pq'}^{p}\,\n{u_{s}}_{q}\ds<\infty .
\label{eq:absconv}
\end{equation}
Under the bounds \eqref{eq:twobounds}, the standard Steklov-averaging chain
rule gives that $s\mapsto\n{u(s)}_{p+1}^{p+1}$ is absolutely continuous on
$[0,t]$ and
\begin{equation}
\n{u(t)}_{p+1}^{p+1}-\n{u_{0}}_{p+1}^{p+1}
=(p+1)\int_{0}^{t}\int_{\Omega}|u|^{p-1}u\,u_{s}\dx\ds .
\label{eq:chainrule}
\end{equation}
Indeed, \eqref{eq:chainrule} is elementary for the Steklov averages of $u$
in time, and one passes to the limit using \eqref{eq:absconv}. Finally, by
\eqref{eq:energy} and \eqref{eq:Phi},
$$
E(t)=\tfrac12\Phi(t)-\tfrac{1}{p+1}\n{u(t)}_{p+1}^{p+1},
$$
so that (S3) reads
$$
\tfrac12\Phi(t)+\int_{0}^{t}\n{A^{\theta/2}u_{s}}^{2}\ds
\le\tfrac12\Phi(0)
+\tfrac{1}{p+1}\Bigl[\n{u(t)}_{p+1}^{p+1}-\n{u_{0}}_{p+1}^{p+1}\Bigr],
$$
and \eqref{eq:S4} follows from \eqref{eq:chainrule}.
\end{proof}

With these two lemmas the proofs can be given. They treat the damping in
opposite ways. In Section~\ref{sec:upper} it does not enter the estimate at
all: the functional is designed so that it cancels, and
Lemma~\ref{lem:invariance} supplies the strict positivity that drives the
concavity argument. In Section~\ref{sec:lower} it is used quantitatively: it
absorbs the nonlinear term through Young's inequality, and
Lemma~\ref{lem:Phiineq} is the form in which it enters. The two sections are
independent of each other and may be read in either order.

\section{Upper bound for the blow-up time}\label{sec:upper}

The strategy is Levine's concavity method: one exhibits a positive
functional $M$ such that $M^{-a}$ is concave for some $a>0$, so that
$M^{-a}$ must reach zero in finite time, forcing $M$ to diverge. The
difficulty with \eqref{eq:main} is that a naive choice of $M$ leaves an
uncontrolled mixed term $(A^{\theta/2}u,A^{\theta/2}u_{t})$ of indefinite
sign. The functional \eqref{eq:M} below is built so that this term cancels
identically, and that is why the argument runs unchanged for every
$\theta\in[0,1]$.

\begin{proof}[Proof of Theorem~\ref{thm:upper}]
Fix $\beta$ as in \eqref{eq:beta} and $\sigma>\sigma_{\theta}$. By the
definition \eqref{eq:sigmatheta} of $\sigma_{\theta}$ we have
$\beta\sigma>\frac{2}{p-1}\n{A^{\theta/2}u_{0}}^{2}-(u_{0},u_{1})$, which is
\eqref{eq:sigma}. Consequently the number
\begin{equation}
T_{1}:=\frac{\n{u_{0}}^{2}+\beta\sigma^{2}}
{\frac{p-1}{2}\bigl[(u_{0},u_{1})+\beta\sigma\bigr]
-\n{A^{\theta/2}u_{0}}^{2}}
\label{eq:T1}
\end{equation}
is well defined and positive. Set $T_{2}:=\min\{T_{1},T_{\max}\}$ and
define, for $t\in[0,T_{2})$,
\begin{equation}
M(t):=\n{u(t)}^{2}
+\int_{0}^{t}\n{A^{\theta/2}u(s)}^{2}\ds
+(T_{1}-t)\n{A^{\theta/2}u_{0}}^{2}
+\beta(t+\sigma)^{2} .
\label{eq:M}
\end{equation}

\emph{Step 1: differentiation of $M$.} Every summand in \eqref{eq:M} is
nonnegative on $[0,T_{2})\subset[0,T_{1})$ and the last one is positive, so
$M>0$ there. By (S1), $t\mapsto\n{u(t)}^{2}$ is $C^{1}$ with derivative
$2(u,u_{t})$, the integral term is $C^{1}$ with derivative
$\n{A^{\theta/2}u(t)}^{2}$, and $t\mapsto\n{A^{\theta/2}u(t)}^{2}$ belongs
to $W^{1,1}_{\mathrm{loc}}$ with derivative
$2\bigl(A^{\theta/2}u,A^{\theta/2}u_{t}\bigr)$, because
$u\in H^{1}_{\mathrm{loc}}\bigl([0,T_{\max});D(A^{\theta/2})\bigr)$ by (S1).
Hence $M\in C^{1}$,
\begin{equation}
M'(t)=2(u,u_{t})
+2\int_{0}^{t}\bigl(A^{\theta/2}u,A^{\theta/2}u_{s}\bigr)\ds
+2\beta(t+\sigma),
\label{eq:Mprime}
\end{equation}
$M'$ is locally absolutely continuous, and for a.e.\ $t\in(0,T_{2})$
\begin{equation}
M''(t)=2\n{u_{t}}^{2}+2\langle u_{tt},u\rangle
+2\bigl(A^{\theta/2}u,A^{\theta/2}u_{t}\bigr)+2\beta .
\label{eq:Mpre}
\end{equation}

\emph{Step 2: the dissipation cancels.} Taking $v=u(t)$ in (S2), and using
$\bigl\langle|u|^{p-1}u,u\bigr\rangle=\n{u}_{p+1}^{p+1}$,
$$
\langle u_{tt},u\rangle
=-\bigl(A^{\theta/2}u_{t},A^{\theta/2}u\bigr)
-\n{\nabla u}^{2}+\n{u}_{p+1}^{p+1}
\qquad\text{for a.e. }t,
$$
and substituting into \eqref{eq:Mpre} the two terms
$\pm2\bigl(A^{\theta/2}u,A^{\theta/2}u_{t}\bigr)$ cancel exactly:
\begin{equation}
M''(t)=2\n{u_{t}}^{2}-2\n{\nabla u}^{2}+2\n{u}_{p+1}^{p+1}+2\beta .
\label{eq:Msecond}
\end{equation}
This identical cancellation, valid for every $\theta\in[0,1]$, is the purpose
of the second and third terms in \eqref{eq:M}, and is the reason why the
bound \eqref{eq:upperbound} has the same form along the entire damping
scale.

\emph{Step 3: a differential inequality.} By \eqref{eq:energy} and (S3),
$$
\n{u}_{p+1}^{p+1}
=(p+1)\Bigl[\tfrac12\Phi(t)-E(t)\Bigr]
\ \ge\ (p+1)\Bigl[\tfrac12\Phi(t)-E(0)
+\int_{0}^{t}\n{A^{\theta/2}u_{s}}^{2}\ds\Bigr],
$$
so that \eqref{eq:Msecond} gives
\begin{equation}
M''(t)\ \ge\ (p+3)\n{u_{t}}^{2}+(p-1)\n{\nabla u}^{2}
+2(p+1)\int_{0}^{t}\n{A^{\theta/2}u_{s}}^{2}\ds
-2(p+1)E(0)+2\beta .
\label{eq:Msecond2}
\end{equation}
By Lemma~\ref{lem:invariance}, $(p-1)\n{\nabla u(t)}^{2}>2(p+1)d$ on
$[0,T_{\max})$, hence on $[0,T_{2})$. Using in addition $2(p+1)\ge p+3$,
valid for $p\ge1$, on the dissipation integral, and $\beta\le2(d-E(0))$ from
\eqref{eq:beta}, we obtain
$2(p+1)(d-E(0))+2\beta\ge(p+1)\beta+2\beta=(p+3)\beta$ and therefore
\begin{equation}
M''(t)\ >\ (p+3)\,Q(t),
\qquad
Q(t):=\n{u_{t}}^{2}
+\int_{0}^{t}\n{A^{\theta/2}u_{s}}^{2}\ds+\beta ,
\label{eq:MQ}
\end{equation}
for a.e.\ $t\in(0,T_{2})$.

\emph{Step 4: concavity.} Applying the Cauchy--Schwarz inequality in
$L^{2}(\Omega)$ to each pairing in \eqref{eq:Mprime}, and then the
Cauchy--Schwarz inequality in $\R^{3}$ to the resulting three products,
\begin{align*}
\bigl(M'(t)\bigr)^{2}
&\le4\Bigl[\n{u}\,\n{u_{t}}
+\Bigl(\int_{0}^{t}\n{A^{\theta/2}u}^{2}\ds\Bigr)^{\!1/2}
\Bigl(\int_{0}^{t}\n{A^{\theta/2}u_{s}}^{2}\ds\Bigr)^{\!1/2}
+\sqrt{\beta}\,(t+\sigma)\cdot\sqrt{\beta}\Bigr]^{2}\\
&\le4\Bigl[\n{u}^{2}+\int_{0}^{t}\n{A^{\theta/2}u}^{2}\ds
+\beta(t+\sigma)^{2}\Bigr]\,Q(t)
\;\le\;4\,M(t)\,Q(t),
\end{align*}
the last step because $(T_{1}-t)\n{A^{\theta/2}u_{0}}^{2}\ge0$ on
$[0,T_{2})$. Combining with \eqref{eq:MQ}, for $\alpha:=\frac{p+3}{4}$, so
that $a:=\alpha-1=\frac{p-1}{4}>0$,
$$
M(t)M''(t)-\alpha\bigl(M'(t)\bigr)^{2}
\ \ge\ M(t)\bigl[M''(t)-(p+3)Q(t)\bigr]\ \ge\ 0
\qquad\text{for a.e. }t\in(0,T_{2}).
$$
Set $y:=M^{-a}>0$. Then $y\in C^{1}$, $y'$ is locally absolutely
continuous, and
$$
y''=-a\,M^{-a-2}\bigl[M M''-\alpha(M')^{2}\bigr]\le0
\qquad\text{a.e. on }(0,T_{2}),
$$
so $y'$ is nonincreasing and $y$ is concave on $[0,T_{2})$. Moreover
$M'(0)=2\bigl[(u_{0},u_{1})+\beta\sigma\bigr]>0$ by \eqref{eq:sigma}, hence
$y'(0)=-a\,M(0)^{-a-1}M'(0)<0$, and concavity gives
\begin{equation}
0<y(t)\le\ell(t):=y(0)+y'(0)\,t,\qquad t\in[0,T_{2}).
\label{eq:concave}
\end{equation}
The affine function $\ell$ is strictly decreasing and vanishes at
$$
T^{*}:=\frac{y(0)}{-y'(0)}=\frac{M(0)}{a\,M'(0)}
=\frac{\n{u_{0}}^{2}+T_{1}\n{A^{\theta/2}u_{0}}^{2}+\beta\sigma^{2}}
{\frac{p-1}{2}\bigl[(u_{0},u_{1})+\beta\sigma\bigr]} .
$$
By the definition \eqref{eq:T1} of $T_{1}$ we have $T^{*}=T_{1}$: clearing
denominators, $T^{*}=T_{1}$ is equivalent to
$$
T_{1}\Bigl(\tfrac{p-1}{2}\bigl[(u_{0},u_{1})+\beta\sigma\bigr]
-\n{A^{\theta/2}u_{0}}^{2}\Bigr)=\n{u_{0}}^{2}+\beta\sigma^{2},
$$
which is \eqref{eq:T1} written as an equality.

\emph{Step 5: conclusion.} From \eqref{eq:concave} and $M=y^{-1/a}$,
\begin{equation}
M(t)\ \ge\ \ell(t)^{-1/a},\qquad t\in[0,T_{2}),
\label{eq:Mdiverges}
\end{equation}
and $\ell(t)\downarrow0$ as $t\uparrow T_{1}$.

Suppose, for contradiction, that $T_{\max}>T_{1}$. Then $T_{2}=T_{1}$, so
\eqref{eq:Mdiverges} holds on all of $[0,T_{1})$ and yields
$\lim_{t\uparrow T_{1}}M(t)=+\infty$. On the other hand $[0,T_{1}]$ is a
compact subset of $[0,T_{\max})$, so $R:=\sup_{[0,T_{1}]}\n{\nabla u(t)}$ is
finite by (S1), and every term of \eqref{eq:M} is then bounded on
$[0,T_{1})$: by \eqref{eq:poincare},
$$
M(t)\le\lambda_{1}^{-1}R^{2}
+T_{1}\lambda_{1}^{-(1-\theta)}R^{2}
+T_{1}\n{A^{\theta/2}u_{0}}^{2}
+\beta(T_{1}+\sigma)^{2}<\infty ,
$$
a contradiction. Therefore $T_{\max}\le T_{1}$, which is
\eqref{eq:upperbound}; in particular $T_{\max}<\infty$, so the solution
blows up in finite time in the sense of Definition~\ref{def:solution}.
\end{proof}

The uniform statement follows at once.

\begin{proof}[Proof of Corollary~\ref{cor:uniform}]
By \eqref{eq:interp}, $\n{A^{\theta/2}u_{0}}^{2}\le N_{0}$ for every
$\theta\in[0,1]$, whence $\sigma_{\theta}\le\sigma_{*}$. For
$\sigma>\sigma_{*}$, Theorem~\ref{thm:upper} applies, and since
$x\mapsto\frac{\n{u_{0}}^{2}+\beta\sigma^{2}}{c-x}$ is increasing on
$(-\infty,c)$, replacing $\n{A^{\theta/2}u_{0}}^{2}$ by the larger quantity
$N_{0}$ in \eqref{eq:upperbound} only increases the right-hand side.
\end{proof}

\begin{remark}
The proof uses (S3) only through the inequality it states, and never the
energy identity. Theorem~\ref{thm:upper} therefore applies verbatim to
limits of Galerkin approximations, for which only the energy inequality is
available.
\end{remark}

\section{Lower bound for the blow-up time}\label{sec:lower}

Here the logic is reversed. Rather than being neutralised, the dissipation is
now spent: the nonlinear term is estimated by H\"older's inequality in the
duality $L^{q_{\theta}'}$--$L^{q_{\theta}}$, and Young's inequality then
lets the dissipation absorb the resulting factor $\n{A^{\theta/2}u_{t}}$
entirely. What survives is a closed differential inequality for $\Phi$,
whose solution cannot become infinite before an explicitly computable time.
This is where the strength of the damping is rewarded, through the size of
$q_{\theta}$ and hence through the admissible range \eqref{eq:plower}.

\begin{proof}[Proof of Theorem~\ref{thm:lower}]
Let $n\ge3$; the case $n\le2$ is identical upon replacing
$(q_{\theta},S_{\theta})$ by $(2,\lambda_{1}^{-\theta/2})$, which is
legitimate by the second inequality in \eqref{eq:poincare}. Write
$q:=q_{\theta}$ and $q':=q_{\theta}'$.

Fix $t\in[0,T_{\max})$. By Lemma~\ref{lem:Phiineq} the inequality
\eqref{eq:S4} holds, and we estimate its right-hand side pointwise in $s$.
By H\"older's inequality with the conjugate pair $(q,q')$ and the embedding
\eqref{eq:embed},
$$
2\int_{\Omega}|u|^{p-1}u\,u_{s}\dx
\le2\,\n{u}_{pq'}^{p}\,\n{u_{s}}_{q}
\le2S_{\theta}\,\n{u}_{pq'}^{p}\,\n{A^{\theta/2}u_{s}} ,
$$
and Young's inequality $2ab\le2a^{2}+\frac12b^{2}$ with
$a=\n{A^{\theta/2}u_{s}}$ and $b=S_{\theta}\n{u}_{pq'}^{p}$ gives
$$
2\int_{\Omega}|u|^{p-1}u\,u_{s}\dx
\le2\n{A^{\theta/2}u_{s}}^{2}+\tfrac12S_{\theta}^{2}\n{u}_{pq'}^{2p}.
$$
The first term on the right is absorbed by the dissipation on the left of
\eqref{eq:S4}. Since \eqref{eq:plower} guarantees $pq'\le\frac{2n}{n-2}$ by
\eqref{eq:exprange}, the embedding
$H_{0}^{1}(\Omega)\hookrightarrow L^{pq'}(\Omega)$ gives
$\n{u}_{pq'}^{2p}\le B_{p,\theta}^{2p}\n{\nabla u}^{2p}\le
B_{p,\theta}^{2p}\Phi^{p}$, and therefore
\begin{equation}
\Phi(t)\ \le\ \Phi(0)+K_{p,\theta}\int_{0}^{t}\Phi(s)^{p}\ds,
\qquad t\in[0,T_{\max}),
\label{eq:integralineq}
\end{equation}
with $K_{p,\theta}=\frac12S_{\theta}^{2}B_{p,\theta}^{2p}$ as in
Theorem~\ref{thm:lower}.

We now compare \eqref{eq:integralineq} with an ordinary differential
inequality. Put
$$
G(t):=\Phi(0)+K_{p,\theta}\int_{0}^{t}\Phi(s)^{p}\ds ,
\qquad t\in[0,T_{\max}).
$$
Then $G$ is locally absolutely continuous, nondecreasing, and
$G\ge\Phi(0)>0$; in particular no positivity discussion for $\Phi$ is
needed. By \eqref{eq:integralineq}, $\Phi\le G$, hence
$$
G'(t)=K_{p,\theta}\Phi(t)^{p}\le K_{p,\theta}G(t)^{p}
\qquad\text{for a.e. }t .
$$
Since $G>0$, the function $G^{1-p}$ is locally absolutely continuous with
$$
\bigl(G^{1-p}\bigr)'=(1-p)G^{-p}G'\ge-(p-1)K_{p,\theta}
\qquad\text{a.e.},
$$
and integrating from $0$ to $t$, with $G(0)=\Phi(0)$,
\begin{equation}
G(t)^{1-p}\ \ge\ \Phi(0)^{1-p}-(p-1)K_{p,\theta}\,t,
\qquad t\in[0,T_{\max}).
\label{eq:Gbound}
\end{equation}
Let
$$
t_{c}:=\frac{1}{(p-1)K_{p,\theta}\Phi(0)^{p-1}},
$$
so that the right-hand side of \eqref{eq:Gbound} is strictly positive
exactly for $t<t_{c}$. For $t\in[0,T_{\max})\cap[0,t_{c})$ we may therefore
raise \eqref{eq:Gbound} to the power $-\frac{1}{p-1}$ and use $\Phi\le G$:
\begin{equation}
\Phi(t)\ \le\ G(t)\ \le\
\bigl[\Phi(0)^{1-p}-(p-1)K_{p,\theta}t\bigr]^{-\frac{1}{p-1}} .
\label{eq:Phibound}
\end{equation}
The right-hand side of \eqref{eq:Phibound} is finite and nondecreasing on
$[0,t_{c})$, so $\Phi$ is bounded on every compact subinterval of
$[0,T_{\max})\cap[0,t_{c})$. If we had $T_{\max}<t_{c}$, then $\Phi$ would
be bounded on all of $[0,T_{\max})$, contradicting the blow-up
alternative (S4). Hence $T_{\max}\ge t_{c}$, which is
\eqref{eq:lowerbound}.
\end{proof}

\begin{remark}
The proof uses no information on the initial energy, and in particular
applies both to solutions that blow up and to global ones, for which
\eqref{eq:lowerbound} is vacuous. Like the proof of
Theorem~\ref{thm:upper}, it uses (S3) only as an inequality, here through
Lemma~\ref{lem:Phiineq}. The estimate \eqref{eq:Phibound} also gives an
explicit growth envelope for the mechanical energy on the guaranteed
interval of existence, not merely the endpoint $t_{c}$.
\end{remark}

\section{Conclusion}\label{sec:conclusion}

For the semilinear wave equation \eqref{eq:main} with fractional structural
damping we have established two explicit lifespan estimates valid on the
whole scale $\theta\in[0,1]$, and hence for the entire family of dissipation
mechanisms interpolating between external friction and internal
Kelvin--Voigt viscoelasticity. Theorem~\ref{thm:upper} gives finite-time
blow-up together with the upper bound \eqref{eq:upperbound} for data with
energy below the depth of the potential well and negative Nehari functional,
and Corollary~\ref{cor:uniform} converts it into a bound which no longer
involves $\theta$ at all. Theorem~\ref{thm:lower} gives the lower bound
\eqref{eq:lowerbound} for exponents in the range \eqref{eq:plower}, with all
constants identified as norms of explicit embeddings. Corollary
\ref{cor:twosided} combines them into a bracket for the lifespan whose two
ends are computable from the initial data.

The two bounds treat the dissipation in opposite ways, and this is the
structural point of the paper. In Theorem~\ref{thm:upper} the damping does
not enter the estimate: the functional \eqref{eq:M} is designed so that it
cancels identically in Step 2 of the proof, which is why the upper bound has
the same form for every $\theta$ and why the uniform version is available.
In Theorem~\ref{thm:lower} the damping is used quantitatively, and the
stronger high-frequency dissipation associated with larger $\theta$ is
rewarded by the wider admissible range \eqref{eq:plower} of nonlinear loads.
In engineering terms, the first bound predicts failure and the second
certifies a safe interval, and only the second improves as the internal
damping of the material is stiffened.

\begin{remark}\label{rem:abstract}
The proofs use no structure of $-\Delta$ beyond self-adjointness, positivity
and compactness of the resolvent. Both theorems therefore extend verbatim to
$$
u_{tt}+A^{\theta}u_{t}+Au=|u|^{p-1}u
$$
for any operator $A$ of that type on $L^{2}(\Omega)$ whose form domain
embeds in $L^{p+1}(\Omega)$, with $q_{\theta}$ dictated by the embedding of
$D(A^{\theta/2})$ and with Definition~\ref{def:solution} adapted
accordingly. This covers the operators that occur in structural models: the
linear elasticity operator with structural damping, and the fourth-order
plate and beam operators $A=\Delta^{2}$ with internal damping
$\Delta^{2\theta}u_{t}$, for which $\theta=\frac12$ corresponds to the
square-root damping of Chen and Triggiani
\cite{ChenTriggiani1989,ChenTriggiani1990} and $\theta=1$ to Kelvin--Voigt
damping of the plate. In each case the two bounds retain the form
\eqref{eq:upperbound} and \eqref{eq:lowerbound}; only the three embedding
constants \eqref{eq:Sstar} and \eqref{eq:constants} and the first eigenvalue
must be recomputed for the operator and geometry at hand.
\end{remark}

We close with four questions suggested by the present analysis. First,
whether the lower bound \eqref{eq:lowerbound} survives in the range
$\frac{n+2\theta}{n-2}<p<\frac{n+2}{n-2}$ for $\theta\in[0,1)$, that is,
whether the restriction \eqref{eq:plower} is an artefact of the method.
Second, whether the constant $K_{p,\theta}$ is monotone in $\theta$, which
is what would be needed to turn Remark~\ref{rem:comments}(iii) into a
monotonicity statement for the lifespan bound itself. Third, the local
theory in the energy space for $\theta\in[0,\frac12)$ and
$\frac{n}{n-2}<p<\frac{n+2}{n-2}$, discussed in
Remark~\ref{rem:lwpdiscussion}. Fourth, a numerical study of the sharpness
of the bracket of Corollary~\ref{cor:twosided} as a function of $\theta$,
which would require a spectral Galerkin discretisation of $A^{\theta}$ and
would complement the computation carried out at $\theta=1$ in
\cite{BchatniaHamoudaKaabi2026}. Let us add that a complement to
Theorem~\ref{thm:upper} in the stable regime, $E(0)<d$ and $I(u_{0})>0$, is
expected to give global existence, as in
\cite{PayneSattinger1975,GazzolaSquassina2006}; since the focus here is on
the blow-up time we do not pursue it.

\section*{Declarations}

\noindent\textbf{Conflict of interest.} The author declares that he has no
conflict of interest.

\smallskip
\noindent\textbf{Data availability.} No datasets were generated or analysed
during the current study.

\end{document}